\documentclass{amsart}
\usepackage{eucal}
\usepackage{amsmath,amsthm,amssymb}

\newcommand{\ith}{\int_{-1/2}^{1/2}}
\newcommand{\thf}[1]{\theta'#1}
\newcommand{\ths}[1]{\theta''#1}
\newcommand{\dt}{\,\mbox{d}t}
\newcommand{\ds}{\displaystyle}
\newcommand{\lp}{\mbox{L}}



\begin{document}

\title{Almost Everywhere Convergence Of Convolution Powers Without Finite Second Moment}
\author{Christopher M. Wedrychowicz} 
\address{Department of Mathematics And Statistics\\
         University At Albany\\ 
         1400 Washington Avenue, Albany, NY, 12222}
\email{cw4347@albany.edu}

\begin{abstract}
Bellow and Calder\'on proved that the sequence of convolution powers
$\ds \mu_n f(x)=\sum_{k\in\mathbb{Z}}\mu^n(k)f(T^k x)$ converges a.e, when $\ds\mu$ is a strictly aperiodic probability measure on $\ds\mathbb{Z}$ such
that the expectation is zero, $\ds E(\mu)=0$, and the second moment is finite, $\ds m_2(\mu)<\infty$. In this paper we extend this result
to cases where $\ds m_2(\mu)=\infty$.  

\end{abstract}

\maketitle

\section{Almost everywhere convergence of convolution powers}

\subsection{Preliminaries}

Let $\ds\mu(k)$, $\ds k\in\mathbb{Z}$, be a probability measure. It's convolution 
$\ds\mu\ast\mu$ is defined by $\ds\mu\ast\mu(k)=\sum_{j\in\mathbb{Z}}\mu(j)\mu(k-j)$.
The $n-$fold convolution \\
$\ds\mu\ast\cdots\ast\mu(k)=\mu^n(k)$ is defined inductively by
$\ds\mu^n(k)=\mu\ast\mu^{n-1}(k)$. The Fourier transform of $\mu$ will be denoted by $\ds\theta(t)$ for $\ds t\in[-1/2,1/2)$ and it is equal to
$$\theta(t)=\sum_{k\in\mathbb{Z}}\mu(k)e^{2\pi ikt}.$$
The weights may be recovered from the inversion formula
$$\ds \mu(k)=\int_{-1/2}^{1/2}\theta(t)e^{-2\pi ikt}\,\mbox{d}t.$$
The $p^{\mbox{th}}$ moment of $\mu$ is the sum $\ds m_p(\mu)=\sum|k|^p\mu(k)$ and we say
that $\mu$ has a $p^{\mbox{th}}$ moment if the above sum is finite. It will be necessary to consider positive non-integral moments $p$.
The expectation of $\mu$ is given by $\ds E(\mu)=\sum_{k\in\mathbb{Z}}k\mu(k)$.
\theoremstyle{definition}
\newtheorem{straper}{Definition}[section]
\begin{straper}
$\ds\mu$ is called strictly aperiodic if the support of $\ds\mu$ is not contained in 
a proper coset of the integers. 
\end{straper}

\vspace{.1in} 

\noindent We have the following important theorem by Foguel(~\cite{foguel}).
\theoremstyle{plain}
\newtheorem{foguel}{Theorem}[section]
\begin{foguel}
$\ds\mu$ is strictly aperiodic $\iff$ $\ds|\theta(t)|\neq 1$ $\ds\forall t\neq 0$.
\end{foguel}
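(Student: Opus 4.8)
The plan is to establish the logically equivalent contrapositive: $\mu$ fails to be strictly aperiodic (its support lies in a proper coset of $\mathbb{Z}$) if and only if $|\theta(t)| = 1$ for some $t \neq 0$. Throughout I would use that $\mu$ is a probability measure, so $\mu(k) \ge 0$ and $\sum_k \mu(k) = 1$; this already gives the a priori bound $|\theta(t)| \le \sum_k \mu(k)\,|e^{2\pi i k t}| = 1$ by the triangle inequality, with $\theta(0) = 1$. Writing $S$ for the support of $\mu$, the whole argument turns on the rigidity of this inequality.

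The crux is the equality case. I claim that $|\theta(t)| = 1$ holds exactly when the unimodular numbers $\{e^{2\pi i k t} : k \in S\}$ all coincide. Indeed, a weighted sum $\sum_k \mu(k) z_k$ of unit vectors $z_k$ with strictly positive weights $\mu(k)$ (for $k \in S$) has modulus equal to $\sum_k \mu(k)$ if and only if all the $z_k$ point in a common direction; since here each $|z_k| = 1$, they must actually be equal. Equivalently, $|\theta(t)| = 1$ iff there is a phase $e^{2\pi i \phi}$ with $e^{2\pi i k t} = e^{2\pi i \phi}$ for every $k \in S$, which is the same as requiring $(k - k')t \in \mathbb{Z}$ for all $k, k' \in S$.

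Granting this, the direction from coset to spectrum is immediate: if $S \subseteq a + d\mathbb{Z}$ with $d \ge 2$, take $t = 1/d$ (and instead $t = -1/2$ in the boundary case $d = 2$, so that $t$ lands in $[-1/2,1/2)$). Every $k \in S$ has the form $k = a + dm$, so $e^{2\pi i k t} = e^{2\pi i a/d}$ is constant on $S$, forcing $|\theta(t)| = 1$ at this $t \neq 0$. For the converse, suppose $|\theta(t)| = 1$ for some $t \in [-1/2,1/2)\setminus\{0\}$, so $(k-k')t \in \mathbb{Z}$ for all $k, k' \in S$. If $t$ is irrational this forces $k = k'$, so $S$ is a singleton, which trivially sits inside a proper coset. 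If $t$ is rational, write $t = p/q$ in lowest terms; since $t$ is a nonzero element of $[-1/2,1/2)$ it is a non-integer, so $q \ge 2$. Then $(k-k')p/q \in \mathbb{Z}$ together with $\gcd(p,q) = 1$ gives $q \mid (k - k')$, so all of $S$ lies in a single residue class $a + q\mathbb{Z}$ with $q \ge 2$. In every case $S$ is contained in a proper coset, which completes the equivalence.

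I expect the only genuine subtlety to be the careful justification of the equality condition in the triangle inequality over $\mathbb{C}$, which is what converts an analytic statement about $|\theta|$ into the arithmetic alignment condition $(k-k')t \in \mathbb{Z}$; the remaining bookkeeping (the endpoint $t = \pm 1/2$, the irrational case, and the coprimality/divisibility step) is routine once that is in hand.
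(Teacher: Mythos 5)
Your argument is correct: the paper itself gives no proof of this statement (it is quoted from Foguel with a citation), and your proof is the standard one --- reduce $|\theta(t)|=1$ to the equality case of the triangle inequality for the positively weighted sum of unimodular numbers $e^{2\pi i kt}$, $k$ in the support, and then translate the alignment condition $(k-k')t\in\mathbb{Z}$ into containment in a coset of $q\mathbb{Z}$ (or a singleton when $t$ is irrational). All the case distinctions you flag (the endpoint $t=\pm 1/2$, irrational $t$, and the coprimality step $q\mid(k-k')$) are handled correctly, and the one step you defer --- that equality in $\bigl|\sum_k\mu(k)z_k\bigr|\le\sum_k\mu(k)$ with $|z_k|=1$ forces all $z_k$ with $\mu(k)>0$ to coincide --- is the standard rigidity statement and is straightforward to justify by writing $|\theta(t)|=\operatorname{Re}\bigl(e^{-i\psi}\theta(t)\bigr)=\sum_k\mu(k)\cos(2\pi kt-\psi)$ for a suitable phase $\psi$.
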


Now let $\ds T:X\rightarrow X$ be an invertible measure preserving transformation of a probability space $\ds(X,\mathcal{B},\lambda).$ For $\ds f\in\mbox{L}^{1}(\lambda)$ one may define 
$$\mu f(x)=\sum_{k\in\mathbb{Z}}\mu(k)f(T^k x).$$
\noindent One then considers the question of a.e convergence for the sequence \\
$\ds\mu^n f(x)=\sum_{k\in\mathbb{Z}}\mu^n(k)f(T^k x)$.

\theoremstyle{definition}

\newtheorem{bddang}[foguel]{Definition}
\begin{bddang}
A probability measure $\mu$ on $\ds \mathbb{Z}$ has bounded angular ratio, if
$\mu$ is strictly aperiodic and $$1\leq \sup_{t\neq 0}\frac{|\hat{\mu}(t)-1|}{1-|\hat{\mu}(t)|}<\infty$$
\end{bddang}

\newtheorem{oper}[foguel]{Definition}
\begin{oper}
We denote by $\ds \mu^n\,:\lp^1(X)\rightarrow\lp^1(X)$ the operator defined by
$\ds\mu^n f(x)=\sum_{k\in\mathbb{Z}}\mu^n(k)f(T^k x)$. We refer to the sequence $\ds \mu^n$
as the sequence of convolution powers of $\ds\mu$.
\end{oper}

\noindent The following Theorem establishes the necessity of of the bounded angular ratio
condition in the study of convolution powers.
\theoremstyle{plain}
\newtheorem{lost}[foguel]{Theorem}
\begin{lost}[\cite{losertsec}]Suppose that $\mu$ is a probability measure on $\mathbb{Z}$($\ds \mu\neq \delta_{k}$, i.e not concentrated
in a single point) and that $\hat{\mu}$ has unbounded angular ratio. Then there exists a function $\ds f\in\mbox{L}^{\infty}$
such that $\ds \mu_nf(x)$ fails to converge a.e.
\end{lost}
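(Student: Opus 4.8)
The plan is to argue by contraposition, passing through a maximal inequality via the Calder\'on transference principle together with the Banach (Stein) maximal principle. Suppose, toward a contradiction, that $\mu_n f$ converges a.e.\ for every $f\in L^\infty(\lambda)$ on every aperiodic system $(X,\mathcal{B},\lambda,T)$. Since the operators $\mu_n$ commute with $T$ and are continuous in measure, the Banach principle forces the maximal operator $\mu^* f=\sup_n|\mu_n f|$ to be finite a.e.\ and, on commuting $L^2$ data, to obey a weak-type maximal inequality. By Calder\'on's transference principle such an inequality is equivalent to a uniform weak-type bound for the discrete maximal operator $Gg=\sup_n|\mu^n\ast g|$ on $\ell^2(\mathbb{Z})$, and a fortiori a finiteness statement for compactly supported $g$. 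It therefore suffices to contradict this transferred inequality.

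The heart of the argument is to read off from the unbounded angular ratio a family of frequencies at which the convolution powers oscillate without decaying. Writing $\theta(t)=1-z(t)$, a direct expansion gives $1-|\theta(t)|\approx \mathrm{Re}\,z(t)-\tfrac12(\mathrm{Im}\,z(t))^2$, while the numerator of the ratio is $|\theta(t)-1|=|z(t)|$; hence a large ratio at $t$ forces $\mathrm{Re}\,z(t)$ to be comparable to $\tfrac12(\mathrm{Im}\,z(t))^2$, so that $\theta(t)$ sits very close to the unit circle yet retains an appreciable argument $\arg\theta(t)\approx-\mathrm{Im}\,z(t)$. By hypothesis there is a sequence along which $r_j=|\theta(t_j)-1|/(1-|\theta(t_j)|)\to\infty$; for strictly aperiodic $\mu$ this forces $t_j\to 0$ by Foguel's theorem, while the remaining possibility is a fixed $t_0\neq 0$ with $|\theta(t_0)|=1$. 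Choosing $n_j\approx 1/(1-|\theta(t_j)|)$ keeps $|\theta(t_j)^{n_j}|\approx e^{-1}$ bounded away from $0$, while the phase winds by $n_j\arg\theta(t_j)\gtrsim r_j\to\infty$. Thus the Fourier multiplier $\theta(t)^n$ of $\mu^n$ fails to stabilize: near $t_j$ the iterates rotate many times around a circle of radius bounded below.

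First I would dispose of the degenerate case $|\theta(t_0)|=1$, $t_0\neq 0$: there $\mu$ is supported in a proper coset, $\mu^n$ reduces to an almost deterministic shift composed with the rotation of a fixed mode, and divergence of $\mu_n f$ for a suitable bounded $f$ follows from the classical oscillation of $f\circ T^{n}$ along a subsequence. In the strictly aperiodic case I would convert the frequency-domain oscillation into an honest unbounded maximal function: localize each mode into a wave packet $g_j$ centered at $t_j$, normalize it in $\ell^\infty$, and use the non-decay of $|\theta(t_j)^{n_j}|$ together with the phase winding to make $|\mu^{n_j}\ast g_j|$ large on a set of non-shrinking relative size. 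Summing the $g_j$ along a sufficiently lacunary sequence keeps the total function uniformly bounded while forcing $\sup_n|\mu^n\ast g|=\infty$ on a set of positive density; transferring back through a Rokhlin tower then yields the desired $f\in L^\infty$ with $\mu_n f$ divergent a.e.

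The main obstacle is precisely this last upgrade. The multiplier computation only shows that the $\ell^2$ norms of the iterates oscillate, whereas the theorem demands genuine a.e.\ divergence for a \emph{bounded} function, which is strictly stronger and more delicate. One must arrange the superposition of wave packets so that constructive interference recurs on a fixed positive-measure set for infinitely many $n$, while simultaneously maintaining the $\ell^\infty$ normalization under the lacunary summation rather than an $\ell^2$ normalization; this balancing, together with covering uniformly both regimes that produce an unbounded ratio (heavy-tailed or drifting strictly aperiodic measures, and non-aperiodic measures), is the key technical difficulty.
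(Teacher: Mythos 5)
This statement is quoted in the paper as a theorem of Losert, cited from \cite{losertsec}, and the paper supplies no proof of it; so the only question is whether your sketch would stand on its own. It does not, for two reasons. First, your opening strategy --- contraposition through the Banach principle and the Calder\'on transfer principle to contradict a weak-type maximal inequality --- cannot work for $L^{\infty}$ data. Since each $\mu^{n}$ is a probability measure, $|\mu_n f(x)|\leq \|f\|_{\infty}$ for every $n$ and every $x$, so the maximal operator $\mu^{*}$ is \emph{trivially} bounded on $L^{\infty}$ and every weak-type inequality you could transfer is automatically satisfied. Finiteness of the maximal function only shows (via the theorem labelled ``ban'' in the paper) that the set of good functions is closed; it never forces convergence, and the dense set $S$ of the Bellow--Jones--Rosenblatt theorem is dense in $L^{p}$ for $p<\infty$ but not in $L^{\infty}$. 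So there is no inequality to contradict, and the contrapositive framing collapses: divergence for a bounded $f$ must be produced by a direct construction.

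Second, your middle paragraph does isolate the correct spectral heart of Losert's argument --- unbounded angular ratio yields frequencies $t_j$ with $|\theta(t_j)^{n_j}|$ bounded away from $0$ for $n_j\approx(1-|\theta(t_j)|)^{-1}$ while the phase $n_j\arg\theta(t_j)$ winds without bound --- but the passage from this multiplier oscillation to a.e.\ divergence of $\mu_n f$ for a single bounded $f$ is exactly the content of the theorem, and you explicitly leave it open (``the main obstacle is precisely this last upgrade''). The lacunary superposition of wave packets you describe controls $\ell^{2}$ behaviour, not almost everywhere behaviour on a fixed positive-measure set; making the constructive interference recur a.e.\ requires the Rokhlin-tower/strong-sweeping-out machinery (or Bourgain's entropy criterion), none of which is supplied. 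As written, the proposal is an accurate description of the difficulty rather than a proof, so it has a genuine gap at its central step.
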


\noindent In order to establish some key properties of the measures under our consideration
we will need the following Theorem from \cite{young}; it provides a generalization of L'Hospital's rule concerning indeterminate forms of type $\ds 0/0$.

\newtheorem{ctsdif}[foguel]{Theorem}
\begin{ctsdif}
\label{ctsdif}
Let $f(x)$ and $F(x)$ be continuous differentiable functions on an open interval $\ds (a,x)$
where $a$ may denote $-\infty$, and the differential coefficients $f'$ and $F'$ have no common zeros or infinities in the open interval and if, as $x$ approaches the value $a$, $f(x)$ and $F(x)$ each have the unique limit zero, then as $x$ approaches the value $a$ the limits of
$$\ds\frac{f(x)}{F(x)}$$ are the limits of $$\ds \frac{f'(x)}{F'(x)}.$$

\end{ctsdif}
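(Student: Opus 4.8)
The plan is to prove this by the Cauchy (generalized) Mean Value Theorem, showing that every limiting value of $f/F$ at $a$ is also a limiting value of $f'/F'$; in particular, whenever $f'/F'$ tends to a limit $L\in\overline{\mathbb{R}}$ the same is forced for $f/F$, which is the usual $0/0$ form of L'Hospital's rule. First I would reduce to a finite endpoint approached from one side: if $a=-\infty$, a monotone $C^{1}$ substitution such as $x=-1/s$ carries the limit $x\to-\infty$ to $s\to 0^{+}$, and since the chain-rule Jacobian $1/s^{2}$ produced in differentiating appears in both the numerator and the denominator of $f'/F'$, it cancels and leaves both the ratio $f'/F'$ and the ratio $f/F$ unchanged. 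The hypothesis that $f'$ and $F'$ have no common zeros or infinities is exactly what makes $f'/F'$ a well-defined $\overline{\mathbb{R}}$-valued function near $a$: the indeterminate form never occurs for the derivatives, and at the exceptional points where $F'=0$ one has $f'\neq0$, so there the ratio is genuinely $\pm\infty$ rather than ambiguous, and its set of cluster values is unambiguous.

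With these reductions, the core estimate runs as follows. Set $M=\limsup_{t\to a^{+}}f'(t)/F'(t)$ and fix $\varepsilon>0$; the argument is only nontrivial when $M<\infty$, in which case $f'/F'<M+\varepsilon$ on some interval $(a,a+\delta)$, and since the ratio stays bounded above there $F'$ cannot change sign, so $F$ is strictly monotone on $(a,a+\delta)$ and $F(x)-F(y)\neq0$ whenever $y\neq x$. For any $a<y<x<a+\delta$ the Cauchy Mean Value Theorem gives a point $\xi\in(y,x)$ with $\frac{f(x)-f(y)}{F(x)-F(y)}=\frac{f'(\xi)}{F'(\xi)}<M+\varepsilon$. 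Now I would freeze $x$ and let $y\to a^{+}$: because $f(y)\to0$ and $F(y)\to0$ the difference quotient converges to $f(x)/F(x)$, so that $f(x)/F(x)\le M+\varepsilon$ for every $x\in(a,a+\delta)$, whence $\limsup_{x\to a^{+}}f(x)/F(x)\le M$ after letting $\varepsilon\downarrow0$. The symmetric argument applied to $m=\liminf_{t\to a^{+}}f'(t)/F'(t)$ gives the reverse one-sided bound, and together they yield
\[
\liminf_{t\to a^{+}}\frac{f'(t)}{F'(t)}\ \le\ \liminf_{x\to a^{+}}\frac{f(x)}{F(x)}\ \le\ \limsup_{x\to a^{+}}\frac{f(x)}{F(x)}\ \le\ \limsup_{t\to a^{+}}\frac{f'(t)}{F'(t)}.
\]
Thus the limiting values of $f/F$ are trapped between the extreme limiting values of $f'/F'$, and when $f'/F'$ possesses an honest limit the two limits agree, which is the asserted conclusion.

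The main obstacle I anticipate is not the Mean Value Theorem step itself but the bookkeeping forced by the weak hypothesis. One must treat carefully the points permitted by ``no common zeros or infinities'' where $F'$ vanishes and the quotient $f'/F'$ becomes infinite: the clean monotonicity of $F$ used above is available only once one knows the relevant extremal limit is finite, and the infinite cases must be disposed of separately (there the corresponding inequality is vacuous). A secondary delicate point is the interchange of limits in passing $y\to a^{+}$ with $x$ held fixed; this is legitimate because $x$ is frozen while numerator and denominator tend to their limits independently, but it is precisely the place where the finite-endpoint reduction and the constant sign of $F$ are genuinely needed to keep the inequality from reversing. Managing these degenerate configurations uniformly, rather than the underlying differentiation estimate, is where the real work lies.
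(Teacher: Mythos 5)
The paper offers no proof of Theorem~\ref{ctsdif} at all---it is imported verbatim from Young's paper on indeterminate forms---so your proposal has to be judged against what the statement actually asserts and against the way the paper uses it. Your Cauchy Mean Value Theorem argument is sound as far as it goes: the reduction of $a=-\infty$ to a finite endpoint, the monotonicity of $F$ once the relevant extremal limit is known to be finite, and the passage $y\to a^{+}$ with $x$ frozen are all handled correctly, and together they prove the classical $0/0$ L'Hospital rule in the sandwich form
\[
\liminf_{t\to a^{+}}\frac{f'(t)}{F'(t)}\ \le\ \liminf_{x\to a^{+}}\frac{f(x)}{F(x)}\ \le\ \limsup_{x\to a^{+}}\frac{f(x)}{F(x)}\ \le\ \limsup_{t\to a^{+}}\frac{f'(t)}{F'(t)}.
\]

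The gap is that the theorem is not the classical rule: it is a statement about the full set of limiting values---every cluster value of $f/F$ at $a$ is a cluster value of $f'/F'$---and the paper relies on precisely that set-theoretic reading when it invokes the theorem in Lemma~\ref{newlemma} to identify the limit points of $g/(1-f)$ with those of $g'/(-f')$. Your sandwich only confines the cluster set of $f/F$ to the closed interval $\bigl[\liminf f'/F',\,\limsup f'/F'\bigr]$, and that interval can be strictly larger than the cluster set of $f'/F'$: the hypotheses allow $F'$ to vanish wherever $f'$ does not, and if $F'$ changes sign on a sequence of points accumulating at $a$, the quotient $f'/F'$ passes ``through infinity'' rather than through finite values, so its cluster set in $\overline{\mathbb{R}}$ can be disconnected (e.g.\ $[-\infty,-1]\cup[1,+\infty]$) while the sandwich only certifies membership in $[-\infty,+\infty]$. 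The repair is already latent in your own computation: freezing $x$ and letting $y\to a^{+}$ in $\frac{f(x)-f(y)}{F(x)-F(y)}=\frac{f'(\xi_{y})}{F'(\xi_{y})}$ shows directly that $f(x)/F(x)$ lies in the closure of $\{f'(\xi)/F'(\xi):a<\xi<x\}$, with no detour through $\limsup$ or $\liminf$; intersecting these closures as $x\downarrow a$ places every limiting value of $f/F$ in the cluster set of $f'/F'$, which is the assertion. (Be aware that the reverse inclusion---every limit of $f'/F'$ a limit of $f/F$---is false in general, as $f(x)=x^{2}\sin(1/x)$, $F(x)=x$ at $0^{+}$ shows; the theorem claims only the forward direction, so your opening sentence states the right goal even though the body of the argument then proves something weaker.)
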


\newtheorem{newlemma}[foguel]{Lemma}
\begin{newlemma}
\label{newlemma}
Suppose $\mu$ is a probability measure defined on $\ds \mathbb{Z}$ such that $\ds m_1(\mu)<\infty$, $\mu$ has bounded angular ratio and $\ds \theta''(t)$ exists on a set $\ds S=(-\delta,\delta)-\{0\}$. Furthermore if $\ds \theta(t)=f(t)+ig(t)$ we have 
$\ds f''(t)<0$ on $S$ then
\begin{itemize}
\item[(a)] $\ds E(\mu)=0$\, , 
\item[(b)] $\ds |g'(t)|\leq c_1|f'(t)|$ and $\ds |g''(t)|\leq c_2|f''(t)|$ on $S$.
\end{itemize}
\end{newlemma}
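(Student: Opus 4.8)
The plan is to read everything off the decomposition $\theta=f+ig$, exploiting that $m_1(\mu)<\infty$ forces $\theta\in C^1$ with $\theta'(0)=2\pi i E(\mu)$, so that $f'(0)=0$ and $g'(0)=2\pi E(\mu)$; that $f$ is even and $g$ is odd; and that, by Foguel's theorem, $f(t)\le|\theta(t)|<1$ for $t\neq0$, whence $1-f(t)>0$ on $S$. First I would record the geometric content of the bounded angular ratio hypothesis as a cone estimate on $\theta-1$. Writing $C=\sup_{t\neq0}|\theta(t)-1|/(1-|\theta(t)|)$, I use $|g|\le|\theta-1|\le C(1-|\theta|)$ together with $1-|\theta|\le 1-|\theta|^2=(1-f)(1+f)-g^2\le 2(1-f)$ to obtain a constant $c_0$ with
\[|g(t)|\le c_0\,(1-f(t))\qquad(t\in S).\]
This single inequality is the engine for both parts.

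For (a) I combine the cone estimate with $f'(0)=0$. Since $f'(0)=0$ we have $(1-f(t))/|t|\to0$ as $t\to0$, while $g(0)=0$ gives $g(t)/t\to g'(0)$. The estimate forces $|g(t)/t|\le c_0\,(1-f(t))/|t|\to0$, so $g'(0)=0$; as $g'(0)=2\pi E(\mu)$ this yields $E(\mu)=0$.

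For (b) I first reduce to a right neighborhood of $0$. Because $f'$ is odd and $g'$ even (so $g'/f'$ is odd) and $f''$ even, $g''$ odd (so $g''/f''$ is odd), it suffices to bound both ratios on $(0,\delta)$. From $f''<0$ and $f'(0)=0$, the function $f'$ is strictly decreasing and negative on $(0,\delta)$, so $f'\neq0$ there; this also supplies the no-common-zeros hypothesis of Theorem~\ref{ctsdif}. On any $[\varepsilon,\delta)$ we have $|f'|\ge|f'(\varepsilon)|>0$ while $|g'|\le 2\pi m_1(\mu)$, so the ratios are controlled away from $0$, and the only issue is the limit as $t\to0^+$. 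Here I would run the generalized L'Hospital rule of Theorem~\ref{ctsdif}: applied to the pair $(g,\,1-f)$ it ties the behaviour of $g/(1-f)$ to that of $g'/(-f')$, and a second application to $(g',\,-f')$ (legitimate since $g'(0)=f'(0)=0$) ties $g'/(-f')$ to $g''/(-f'')$, the non-vanishing of $f'$ guaranteeing the hypotheses throughout. The convexity of $1-f$ forced by $f''<0$ is what converts the integrated cone bound into pointwise control: $1-f$ convex with $(1-f)(0)=0$ gives $-f'(t)\ge(1-f(t))/t$, pinning the size of $-f'$ to that of $1-f$, against which the oscillation of $g'$ is measured.

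The main obstacle is exactly this transfer from a bound on $g$ to bounds on $g'$ and $g''$. A cone bound $|g|\le c_0(1-f)$ by itself does \emph{not} control $g'$ pointwise, since oscillation can inflate a derivative while leaving the function small; the argument must therefore use the monotonicity forced by $f''<0$ in an essential way, together with the existence of $\theta''$ on $S$ and the uniform estimate $|g'|\le 2\pi m_1(\mu)$ to tame the oscillation of $g'$ near $0$. Making the second inequality $|g''|\le c_2|f''|$ rigorous is the most delicate point, because near $0$ one compares two quantities that both tend to $0$ and for which only existence—not continuity or sign—of $g''$ is given; the generalized L'Hospital rule is the tool designed to resolve precisely such $0/0$ indeterminacies without assuming that the ratio of derivatives converges.
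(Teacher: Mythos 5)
Your proof is correct and follows essentially the same route as the paper's: the substantive part (b) rests on the same double application of the generalized L'Hospital rule (Theorem~\ref{ctsdif}) to the pairs $(g,\,1-f)$ and $(g',\,-f')$, with $f''<0$ and $f'(0)=0$ supplying the non-vanishing of $f'$ needed for its hypotheses. The only differences are that you derive the bound $|g(t)|\le c_0(1-f(t))$ and the vanishing of $E(\mu)$ by direct elementary estimates, where the paper simply cites Losert's Lemma~1 and Proposition~1.9 of \cite{belrjblatt}; your versions are correct and self-contained.
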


\begin{proof}
(a) is a special case of a result proved in \cite{belrjblatt}(Proposition 1.9). To prove (b) we first note that by \cite{losert}(Lemma 1)
$$\ds \limsup_{t\rightarrow 0}\frac{|1-\hat{\mu}(t)|}{1-|\hat{\mu}(t)|}<\infty\Leftrightarrow
\limsup_{t\rightarrow 0}\left|\frac{g(t)}{1-f(t)}\right|<\infty$$
Since $f''(t)<0$ on $S$ and $\ds f'(0)=0$, $f'(t)$ has no zeros on $S$ except when $t=0$. Therefore by Theorem \ref{ctsdif} the limit points of $\ds \frac{g(t)}{1-f(t)}$ coincide with that of $\ds \frac{g'(t)}{-f'(t)}$ and $\ds\frac{g''(t)}{-f''(t)}$, which proves (b).
\end{proof}

\theoremstyle{definition}
\newtheorem{defmax}[foguel]{Definition}
\begin{defmax}
Given a sequence of operators $\ds T_n: \lp_1\rightarrow \mathcal{M}(X)$, where $\ds\mathcal{M}(X)$ denotes the set of measurable functions, the maximal operator $\ds T^{\ast}$ of $\ds\{T_n\}$
is defined by $\ds T^{\ast}f(x)=\sup_n|T_n f(x)|$. For the sequence $\ds\{\mu^n\}$ the maximal operator will be denoted by $\ds\mu^{\ast}$.
\end{defmax}

\theoremstyle{plain}

\newtheorem{ban}[foguel]{Theorem}
\begin{ban}
\label{ban}
Let $\ds (X,\mathcal{B}, m)$ be a probability space. If $\ds \{T_{n}\}$ is a sequence of bounded operators such that 
\begin{eqnarray*}
T^{\ast}f(x)=\sup_{n}|T_{n}f(x)|<\infty \quad \mbox{a.e.}
\end{eqnarray*}
for every $\ds f\in \lp_1$ then the set of functions in $\ds\lp_1$ such that $\ds T_{n}f(x)$ converges a.e. is closed.
\end{ban}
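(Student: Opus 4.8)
The plan is to prove this as the classical Banach principle, in two stages: first I would extract a quantitative weak-type maximal inequality for $\ds\mu^{\ast}$ — here for a general $T^{\ast}$ — from the mere a.e. finiteness hypothesis, and then deduce closedness of the convergence set from that inequality. For the first stage, I would begin by noting that since $m(X)<\infty$, convergence in measure on $\ds\mathcal{M}(X)$ is metrizable and complete, and each $T_n$, being a bounded linear operator on $\ds\lp_1$, is continuous from $\ds\lp_1$ into $\ds\mathcal{M}(X)$; consequently each truncated operator $\ds T^{\ast}_M f=\max_{n\le M}|T_n f|$ is continuous into measure. Fixing $\ds\epsilon>0$, I would set $\ds A_N=\{f\in\lp_1:m(\{T^{\ast}f>N\})\le\epsilon\}$. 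Using that the functional $\ds f\mapsto m(\{T^{\ast}_M f>N\})$ is lower semicontinuous (pass to an a.e.-convergent subsequence along which the $\liminf$ is realized and apply Fatou to the indicators $\ds\mathbf{1}_{\{T^{\ast}_M f>N\}}$) together with $\ds\{T^{\ast}f>N\}=\bigcup_M\{T^{\ast}_M f>N\}$, I would exhibit each $A_N$ as an intersection of closed sets, hence closed. The hypothesis $\ds T^{\ast}f<\infty$ a.e. forces $\ds\bigcup_N A_N=\lp_1$, so the Baire category theorem yields an $N_0$ for which $A_{N_0}$ contains a ball $\ds\{\|g-g_0\|<r\}$.

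From this local bound I would pass to a uniform one by exploiting the subadditivity $\ds T^{\ast}(u+v)\le T^{\ast}u+T^{\ast}v$ and homogeneity $\ds T^{\ast}(cu)=|c|T^{\ast}u$ of $T^{\ast}$ (both consequences of linearity of the $T_n$). Since both $g_0$ and $g_0+f$ lie in the ball for $\ds\|f\|<r$, one controls $\ds m(\{T^{\ast}f>2N_0\})$ by $\ds 2\epsilon$ there, and scaling $f$ gives, for $\ds c(\lambda):=\sup_{\|f\|\le 1}m(\{T^{\ast}f>\lambda\})$, some $\lambda_0$ with $\ds c(\lambda_0)\le 2\epsilon$. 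As $\epsilon$ was arbitrary and $c$ is nonincreasing, this establishes $\ds c(\lambda)\to 0$ as $\ds\lambda\to\infty$, equivalently the weak-type estimate $\ds m(\{T^{\ast}f>\lambda\})\le c(\lambda/\|f\|)$.

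For the second stage, let $f$ lie in the closure of the convergence set and choose $f_j\to f$ in $\ds\lp_1$ with each $T_n f_j$ convergent a.e. I would introduce the oscillation $\ds\Omega f(x)=\limsup_{m,n}|T_n f(x)-T_m f(x)|$, which satisfies $\ds\Omega f\le 2T^{\ast}f<\infty$ a.e., and for which $T_n f(x)$ converges precisely when $\ds\Omega f(x)=0$. Since $\Omega$ is subadditive and $\ds\Omega f_j=0$ a.e., one gets $\ds\Omega f\le\Omega(f-f_j)\le 2T^{\ast}(f-f_j)$ a.e., so for every $\lambda>0$, $\ds m(\{\Omega f>\lambda\})\le m(\{T^{\ast}(f-f_j)>\lambda/2\})\le c(\lambda/(2\|f-f_j\|))\to 0$ as $j\to\infty$. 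Hence $\ds m(\{\Omega f>\lambda\})=0$ for all $\lambda>0$, so $\ds\Omega f=0$ a.e., $T_n f$ converges a.e., and $f$ belongs to the convergence set.

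The main obstacle is the first stage: manufacturing a uniform maximal inequality out of the purely pointwise hypothesis $\ds T^{\ast}f<\infty$ a.e. The two delicate points there are the lower-semicontinuity argument that makes the $A_N$ closed so that Baire applies, and the bookkeeping that upgrades the bound on a single ball to a homogeneous global estimate; once $\ds c(\lambda)\to 0$ is in hand, the closedness in the second stage is a routine subadditivity computation.
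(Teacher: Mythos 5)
The paper states this result without proof --- it is the classical Banach principle (see, e.g., Garsia, \emph{Topics in Almost Everywhere Convergence}) --- so there is no in-paper argument to compare against. Your two-stage proof is the standard one and is correct: the Baire-category extraction of the continuity of $T^{\ast}$ at the origin in the metric of convergence in measure (via the closed sets $A_N$, whose closedness your lower-semicontinuity/Fatou argument for the truncations $T^{\ast}_M$ correctly establishes, using that the union over $M$ is increasing), followed by the subadditivity of the oscillation functional $\Omega$ to transfer a.e.\ convergence from the dense approximants $f_j$ to $f$. The only point worth making explicit is the reading of ``bounded operators'': the argument needs each $T_n$ to be continuous from $\lp_1$ into $\mathcal{M}(X)$ with the topology of convergence in measure, which your interpretation (bounded linear on $\lp_1$, hence continuous in measure by Chebyshev) supplies and which is the correct hypothesis in the paper's context, where the $T_n$ are the $\lp_1$-contractions $\mu^n$.
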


In order to show $\ds T^{\ast}f(x)<\infty$ a.e, one often establishes a weak maximal inequality for the operator $\ds T^{\ast}$ of the form

\begin{eqnarray*}
m(\{x:T^{\ast}(x)\geq \lambda \})\leq C\frac{\|f\|_1}{\lambda}
\end{eqnarray*}

where $\ds C$ is a constant independent of $f$ and $\lambda$.

\theoremstyle{plain}

\newtheorem{thml}[foguel]{Theorem}
\begin{thml}[~\cite{belrjblatt}]
If $\ds\mu$ is strictly aperiodic, $\ds\mu_n f(x)$ converges a.e. for all\\
$\ds f\in S=\{(f_1\circ T-f_1)+f_2\, : f_1\in\lp_{\infty}(X),\: f_2(T x)=f_2(x)\,\mbox{a.e.}\}$, which is a dense set in $\ds \lp^p$ for all $\ds p\geq 1$. 
\end{thml}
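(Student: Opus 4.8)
The plan is to use linearity of each operator $\mu^n$ and split an arbitrary $f\in S$ as $f=g+f_2$, where $g=f_1\circ T-f_1$ is a coboundary with $f_1\in\lp_\infty$ and $f_2$ is $T$-invariant. The invariant piece is immediate: since $f_2(T^kx)=f_2(x)$ a.e. for every $k$ and $\mu^n$ is a probability measure, $\mu^n f_2(x)=f_2(x)\sum_k\mu^n(k)=f_2(x)$ for all $n$, so this part is constant in $n$ and trivially converges. Everything therefore reduces to proving $\mu^n g\to 0$ (I expect uniform, hence a.e., convergence), which then gives $\mu^n f\to f_2$ a.e.

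For the coboundary I would reindex. Writing $\mu^n g(x)=\sum_k\mu^n(k)\bigl(f_1(T^{k+1}x)-f_1(T^kx)\bigr)$ and shifting the index in the first sum yields the Abel-type identity $\mu^n g(x)=\sum_k\bigl(\mu^n(k-1)-\mu^n(k)\bigr)f_1(T^kx)$. Since $f_1\in\lp_\infty$ this gives the $x$-uniform bound $|\mu^n g(x)|\le\|f_1\|_\infty\sum_k|\mu^n(k)-\mu^n(k-1)|$, so it suffices to show that the $\ell^1$ norm of the first difference of $\mu^n$ tends to $0$, i.e. $\sum_k|\mu^n(k)-\mu^n(k-1)|\to 0$.

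To establish this I would first pass through $\ell^2$. Putting $h_n(k)=\mu^n(k)-\mu^n(k-1)$, its Fourier transform is $(1-e^{2\pi it})\theta(t)^n$, so Parseval gives $\sum_k|h_n(k)|^2=\ith|1-e^{2\pi it}|^2|\theta(t)|^{2n}\dt$. By Foguel's theorem, strict aperiodicity forces $|\theta(t)|<1$ for all $t\neq 0$, so the integrand tends to $0$ pointwise off the origin and is dominated by the integrable function $|1-e^{2\pi it}|^2=4\sin^2(\pi t)$; dominated convergence then yields $\|h_n\|_2\to 0$ with no moment hypothesis at all. I would also record the structural fact that convolution by the probability measure $\mu$ is an $\ell^1$ contraction, whence $\|h_{n+1}\|_1\le\|h_n\|_1$ and $\|h_n\|_1$ decreases to some limit. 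To see that this limit is $0$ I would split $\sum_k|h_n(k)|$ at a radius $R_n$: Cauchy--Schwarz bounds the inner part $|k|\le R_n$ by $(2R_n+1)^{1/2}\|h_n\|_2$, while the outer part $|k|>R_n$ is at most twice the tail mass $\sum_{|k|>R_n}\mu^n(k)$, after which one picks $R_n\to\infty$ balancing the two. This balancing is the main obstacle. With a finite second moment the $\ell^2$ integral decays like $n^{-3/2}$ against a spread of order $n^{1/2}$ and the split closes at once; but in the regime $m_2(\mu)=\infty$ central to this paper one must simultaneously quantify the flattening of $\theta(t)^n$ near $t=0$ (controlling $\ith\sin^2(\pi t)|\theta|^{2n}\dt$) and the tail of $\mu^n$, and the delicate point is showing that the $\sin^2(\pi t)$ weight gains enough extra decay near the origin to beat the wider spread of the walk.

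Finally, for density of $S$ in $\lp^p$ I would invoke the mean ergodic theorem rather than strict aperiodicity. Given $f\in\lp_\infty$, write $f=(f-A_Nf)+A_Nf$ with $A_Nf=\frac1N\sum_{k=0}^{N-1}f\circ T^k$. The factorization $I-U^k=(I-U)(I+\cdots+U^{k-1})$, where $Uf=f\circ T$, exhibits $f-A_Nf$ as a coboundary $(I-U)g_N$ with $g_N\in\lp_\infty$, while $A_Nf\to Pf$ in $\lp^p$, where $Pf$ is the $T$-invariant conditional expectation onto the invariant $\sigma$-algebra. Hence $(f-A_Nf)+Pf\in S$ approximates $f$ in $\lp^p$, giving $\lp_\infty\subset\overline S$, and the density of $\lp_\infty$ in $\lp^p$ for $1\le p<\infty$ completes the argument.
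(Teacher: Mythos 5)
The paper does not prove this theorem --- it is quoted from Bellow--Jones--Rosenblatt \cite{belrjblatt} --- so the comparison is against the standard argument there. Your architecture is the right one: kill the invariant part by $\mu^n f_2=f_2$, reduce the coboundary part to the $x$-uniform bound $|\mu^n(f_1\circ T-f_1)(x)|\le\|f_1\|_\infty\sum_k|\mu^n(k)-\mu^n(k-1)|$, and get density from the mean ergodic theorem via $f-A_Nf=(I-U)g_N$. All of that is correct. The problem is that the entire content of the theorem is the claim $\|h_n\|_1=\sum_k|\mu^n(k)-\mu^n(k-1)|\to 0$, and you leave exactly that step open, explicitly flagging the $R_n$-balancing as ``the main obstacle.'' Worse, at the stated level of generality (strict aperiodicity only, no moment hypothesis, which is how the theorem is stated and how \cite{belrjblatt} proves it) your route cannot be closed: Parseval plus dominated convergence gives $\|h_n\|_2\to0$, and monotonicity gives that $\|h_n\|_1$ has a limit, but these two facts are compatible with $\|h_n\|_1\to 2$ (mass spreading out while flattening), and for heavy-tailed $\mu$ the spread of $\mu^n$ can outrun the $\ell^2$ decay so that no choice of $R_n$ works. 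The standard proofs use the aperiodicity structurally rather than only through $|\theta(t)|<1$: either the Ornstein--Sunyach zero--two law, or the elementary ``Bernoulli part'' device --- strict aperiodicity forces some $\mu^m$ to charge two adjacent integers, so $\mu^m=\alpha\nu+(1-\alpha)\rho$ with $\nu$ supported on $\{a,a+1\}$, and the binomial factor $\nu^j$ makes $\|\mu^{mn}-\mu^{mn}\ast\delta_1\|_1\to0$ directly.

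It is worth noting that under the standing hypothesis of Section 2 of this paper, $m_1(\mu)<\infty$, your balancing \emph{does} close and gives a clean quantitative proof: Petrov's bound $|\theta(t)|\le e^{-Ct^2}$ yields $\|h_n\|_2^2=\ith 4\sin^2(\pi t)|\theta(t)|^{2n}\dt=O(n^{-3/2})$, while $m_1(\mu^n)\le n\,m_1(\mu)$ and Markov's inequality give $\sum_{|k|>R}\mu^n(k)\le n\,m_1(\mu)/R$; taking $R_n=n^{5/4}$ makes both pieces $O(n^{-1/8})$. So you should either restrict the claim to measures with a first moment and carry out that computation, or replace the $\ell^2$ argument by the zero--two law / Bernoulli-part argument to recover the theorem in the generality in which it is stated.
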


\theoremstyle{remark}
\newtheorem{rem1}[foguel]{Remark}
\begin{rem1}
The above two Theorems imply that in order to establish a.e convergence of $\ds\mu^n f(x)$ for
a strictly aperiodic measure $\ds\mu$ we need only show that $\ds\mu^{\ast}$ satisfies a weak $\ds\lp_1$ inequality
of the form given above.
\end{rem1}

\noindent In ~\cite{cald} it was shown that in order to establish weak maximal inequalities for operators on continuous spaces it is enough to establish them for an operator on $\ds l^1(\mathbb{Z})$
that has been transfered from the continuous space. Such processes are known collectively 
as the Calder\'on Transfer Principle. In our case we make use of the following version.

\theoremstyle{plain}
\newtheorem{caldtransf}[foguel]{Theorem}
\begin{caldtransf}[Calder\'on Transfer Principle,~\cite{caldzyg}]
Consider the dynamical system \\
$\ds (\mathbb{Z},\mathbb{P},|\cdot|,T)$ where $\ds |B|=\#$of elements in
$\ds B$, $\ds\mathbb{P}=$Power set of $\ds\mathbb{Z}$ and $\ds T(x)=x+1$. For $\ds\forall\phi\in l_1(\mathbb{Z})$ we have 
$\ds\mu^n\phi(k)=(\mu^n\ast\phi)(k)=\sum_{j\in\mathbb{Z}}\mu^n(j)\phi(k-j)$ and $\ds (M\phi)(k)=\sup_n (\mu^n\phi)(k)$. Then if
$$\left|k\in\mathbb{Z}:|(M\phi)(k)|\geq t\right|\leq C\frac{\|\phi\|_{l_1}}{t}$$ 
we have 
$$\lambda\left(\left\{x\in X:|(Mf)(x)|\geq t\right\}\right)\leq C\frac{\|f\|_{1}}{t}$$
for all $\ds (X,\mathcal{B},\lambda,T)$ and for all $\ds f\in\lp_1(X)$.
\end{caldtransf}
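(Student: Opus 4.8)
The plan is to use the Calder\'on transference method in its standard form, integrating the hypothesized $\mathbb{Z}$-inequality along individual orbits of $T$ and then averaging. The one genuine complication is that the convolution kernels $\mu^n$ are not finitely supported, so the usual finite-window comparison between the orbit function and the transferred function carries boundary error terms; the crux of the argument is to show these errors are negligible after averaging over a long window. I would first reduce to the truncated maximal operator $M_N f(x)=\max_{1\le n\le N}|\mu^n f(x)|$, proving the weak inequality with constant independent of $N$ and then letting $N\to\infty$, since $\{M_Nf>t\}\uparrow\{Mf>t\}$ by monotone convergence. Fix $f\in\lp_1(X)$, $t>0$, a small $\epsilon>0$, and integers $R>S>0$ (eventually $S=\lfloor\sqrt{R}\rfloor$). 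For a.e.\ $x$ define the orbit function $\phi_x(j)=f(T^{-j}x)\mathbf{1}_{\{|j|\le R\}}(j)\in l_1(\mathbb{Z})$. Using only that $T$ is invertible and measure preserving, a direct computation gives, for $|k|\le R-S$,
$$(\mu^n\phi_x)(k)=(\mu^n f)(T^{-k}x)-E_n(k,x),\qquad E_n(k,x)=\sum_{|i|>R}\mu^n(k-i)\,f(T^{-i}x),$$
where the sign on the exponent is harmless because $T^{-k}$ is again measure preserving.

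Next I would set up the transference count on the window $W=\{k:|k|\le R-S\}$, of cardinality $|W|=2(R-S)+1$. If $M_Nf(T^{-k}x)\ge t$ and $\max_{n\le N}|E_n(k,x)|<\epsilon t$, then some $n\le N$ satisfies $|(\mu^n\phi_x)(k)|>(1-\epsilon)t$, so $M\phi_x(k)>(1-\epsilon)t$. Hence
$$\#\{k\in W: M_Nf(T^{-k}x)\ge t\}\le \#\{k: M\phi_x(k)>(1-\epsilon)t\}+\#\{k\in W:\max_{n\le N}|E_n(k,x)|\ge \epsilon t\}.$$
To the first term I apply the hypothesized $\mathbb{Z}$-inequality at level $(1-\epsilon)t$, obtaining the bound $\tfrac{C}{(1-\epsilon)t}\|\phi_x\|_{l_1}$; to the second I apply Chebyshev, bounding it by $\tfrac{1}{\epsilon t}\sum_{n\le N}\sum_{k\in W}|E_n(k,x)|$.

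Finally I would integrate both sides over $X$ and divide by $|W|$. The left side collapses, via measure preservation of each $T^{-k}$, to $\lambda(\{M_Nf\ge t\})$. The first right-hand term becomes $\tfrac{C}{(1-\epsilon)t}\cdot\tfrac{2R+1}{2(R-S)+1}\|f\|_1$, since $\int_X\|\phi_x\|_{l_1}\,d\lambda=(2R+1)\|f\|_1$. The key estimate is for the error term: one checks that $\int_X|E_n(k,x)|\,d\lambda\le \mu^n(\{|l|>S\})\|f\|_1$ for each $k\in W$ (for $|k|\le R-S$ and $|i|>R$ the index $l=k-i$ satisfies $|l|>S$), so the second term is at most $\tfrac{1}{\epsilon t}\cdot\tfrac{2R+1}{2(R-S)+1}\sum_{n\le N}\mu^n(\{|l|>S\})\|f\|_1$. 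Letting $R\to\infty$ with $S=S(R)\to\infty$ and $S/R\to0$, the ratio $\tfrac{2R+1}{2(R-S)+1}\to1$ and the finite sum $\sum_{n\le N}\mu^n(\{|l|>S\})\to0$, so the error term vanishes and we obtain $\lambda(\{M_Nf\ge t\})\le \tfrac{C}{(1-\epsilon)t}\|f\|_1$; sending $\epsilon\to0$ and then $N\to\infty$ yields the claim with the same constant $C$. I expect the main obstacle to be exactly this control of the boundary error $E_n$: the non-compact support of $\mu^n$ forces the window-averaging device, and one must verify that the tail masses $\mu^n(\{|l|>S\})$, summed over the finitely many $n\le N$, still tend to zero as the window grows — which they do for each fixed $N$.
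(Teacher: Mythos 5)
Your proof is correct: the window truncation of the orbit function, the comparison with boundary error $E_n$, the Chebyshev treatment of the error set, and the averaging over the window with $S\to\infty$, $S/R\to 0$ is precisely the standard Calder\'on transference argument, and your control of the tail masses $\mu^n(\{|l|>S\})$ for the finitely many $n\le N$ correctly handles the only delicate point (the non-compact support of the kernels). The paper does not prove this theorem but cites it, and your argument is essentially the one in the cited source, so there is nothing to contrast.
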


\noindent In light of the above, the following general result, which we shall use, was obtained in \cite{cald}.

\newtheorem{belcald}[foguel]{Theorem}
\begin{belcald}
\label{belcald}
Let $\ds (\mu_n)$ be a sequence of probabilities on $\ds\mathbb{Z}$ and for 
$\ds \phi\in\lp_1(\mathbb{Z})$ define the maximal operator 
$$(M\phi)(x)=\sup_n|(\mu_n\phi)(x)|\:,\quad  x\in \mathbb{Z}$$
Assume that there is $\ds 0<\alpha\leq 1$ and $\ds C''>0$ such that for each $\ds n\geq 1$
\begin{eqnarray}
|\mu_n(x+y)-\mu_n(x)|\leq C''\frac{|y|^{\alpha}}{|x|^{1+\alpha}},\:\mbox{for}\: x,y\in\mathbb{Z},\:\mbox{and}\: 2|y|\leq |x|
\end{eqnarray}
Then the maximal operator $M$ ie weak type $\ds (1,1)$,i.e there is $\ds C>0$
such that for any $\ds \lambda>0$
$$m\left\{x\in X:(M\phi)(x)>\lambda\right\}\leq\frac{C}{\lambda}\|f\|_1\:\mbox{for}\,\mbox{all}\: \phi\in\mbox{L}^{1}(X)$$ 
\end{belcald}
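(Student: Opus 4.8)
The plan is to run the classical Calder\'on--Zygmund argument in the discrete setting of $\mathbb{Z}$, with the observation that hypothesis (1) is a H\"ormander-type smoothness condition that holds \emph{uniformly in} $n$. Fix $\phi\in\lp_1(\mathbb{Z})$ and $\lambda>0$. First I would record two elementary but decisive facts. Since each $\mu_n$ is a probability, Young's inequality gives $\|\mu_n*\phi\|_\infty\le\|\phi\|_\infty$ and $\|\mu_n*\phi\|_1\le\|\phi\|_1$, so every $\mu_n$ is a contraction on $\lp_\infty$ and on $\lp_1$. Secondly, the maximal operator is subadditive in a strong sense: $M(\phi_1+\phi_2)\le M\phi_1+M\phi_2$ and, crucially, $M\bigl(\sum_j b_j\bigr)\le\sum_j Mb_j$, because a supremum of a sum is dominated by the sum of the suprema. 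This last point is what lets us handle the maximal operator as cheaply as a single operator; in particular we will \emph{not} need any $\lp_2$ bound for $M$ itself.

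Next I would perform the Calder\'on--Zygmund decomposition of $\phi$ at height $c\lambda$ (with $c\le 1/4$ an absolute constant), using maximal dyadic intervals of $\mathbb{Z}$. This yields a disjoint family $\{Q_j\}$ and a splitting $\phi=g+b$ with $b=\sum_j b_j$, where $g$ equals $\phi$ off $\bigcup_j Q_j$ and equals the average of $\phi$ on each $Q_j$, so that $\|g\|_\infty\le 2c\lambda$ and $\|g\|_1\le\|\phi\|_1$; each $b_j$ is supported on $Q_j$, has mean zero, and satisfies $\|b_j\|_1\lesssim\lambda|Q_j|$; and $\sum_j|Q_j|\le\|\phi\|_1/(c\lambda)$. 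Since $\{M\phi>\lambda\}\subseteq\{Mg>\lambda/2\}\cup\{Mb>\lambda/2\}$ and $Mg\le\|g\|_\infty\le2c\lambda\le\lambda/2$ everywhere by the $\lp_\infty$-contraction property, the good set $\{Mg>\lambda/2\}$ is empty. Thus only the bad part survives, and the entire analytic content is pushed into the bad part.

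For the bad part, let $Q_j^*$ be the interval concentric with $Q_j$ of three times the length and put $E=\bigcup_j Q_j^*$, so that $m(E)\le\sum_j|Q_j^*|=3\sum_j|Q_j|\lesssim\|\phi\|_1/\lambda$, which is of the right order. On $E^c$ I would estimate $Mb$ pointwise. Writing $c_j$ for the center of $Q_j$ and using $\sum_y b_j(y)=0$, for $x\notin Q_j^*$ we have
\[
\mu_n*b_j(x)=\sum_{y\in Q_j}\bigl(\mu_n(x-y)-\mu_n(x-c_j)\bigr)b_j(y).
\]
For $y\in Q_j$ and $x\notin Q_j^*$ the geometry gives $2|y-c_j|\le|Q_j|<|x-c_j|$, so hypothesis (1) applies with the substitution $x\mapsto x-c_j$, $y\mapsto c_j-y$ and yields, uniformly in $n$,
\[
|\mu_n*b_j(x)|\le C''\sum_{y\in Q_j}\frac{|y-c_j|^\alpha}{|x-c_j|^{1+\alpha}}\,|b_j(y)|\le C''\,\frac{|Q_j|^\alpha}{|x-c_j|^{1+\alpha}}\,\|b_j\|_1 .
\]
Taking the supremum over $n$ and summing over $x\notin Q_j^*$, the tail sum $\sum_{|x-c_j|>|Q_j|}|x-c_j|^{-(1+\alpha)}$ is comparable to $|Q_j|^{-\alpha}$, whence $\sum_{x\notin Q_j^*}Mb_j(x)\lesssim C''\|b_j\|_1$. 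Summing in $j$ and invoking $Mb\le\sum_j Mb_j$ gives $\int_{E^c}Mb\,dm\lesssim C''\sum_j\|b_j\|_1\lesssim C''\lambda\sum_j|Q_j|\lesssim C''\|\phi\|_1$. Chebyshev then yields $m(\{Mb>\lambda/2\}\cap E^c)\lesssim\|\phi\|_1/\lambda$, and combining this with the bound for $m(E)$ completes the proof.

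The main obstacle is organizational rather than deep. One must verify that the smoothness hypothesis (1) is genuinely uniform in $n$, so that the pointwise kernel estimate survives passage to the supremum; this is exactly what the inequality $Mb\le\sum_j Mb_j$ exploits, and it is the reason no $\lp_2$ bound for the maximal function is required. One must also arrange the discrete Whitney/dyadic geometry on $\mathbb{Z}$ so that the side condition $2|y|\le|x|$ in (1) is available for every $y\in Q_j$ and $x\notin Q_j^*$, which forces the dilation of $Q_j$ before forming the exceptional set $E$. The only genuine computation is the convergence of the tail sum $\sum_{|x-c_j|>|Q_j|}|x-c_j|^{-(1+\alpha)}\lesssim|Q_j|^{-\alpha}$, which is precisely where $\alpha>0$ enters.
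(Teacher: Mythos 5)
This theorem is quoted in the paper from Bellow and Calder\'on's work \cite{cald} and is not proved there, so there is no in-paper argument to compare against; your Calder\'on--Zygmund proof is correct and is essentially the standard (indeed the original) argument for such statements. The key points are all in place: the displayed hypothesis is a H\"ormander-type condition \emph{uniform} in $n$, the $\lp_{\infty}$-contraction of each $\mu_n$ disposes of the good part with no $\lp_2$ input, and the mean-zero cancellation together with the tail bound $\sum_{m>N}m^{-1-\alpha}\lesssim N^{-\alpha}$ handles the bad part after passing the supremum inside the sum over $j$. The one detail worth tidying is that the reference point $c_j$ should be taken to be an integer belonging to $Q_j$ rather than the literal midpoint (which may be a half-integer, putting $x-c_j$ outside $\mathbb{Z}$ where the hypothesis applies), with $Q_j^*$ enlarged accordingly so that $2|y-c_j|\le|x-c_j|$ still holds for $y\in Q_j$ and $x\notin Q_j^*$; this is cosmetic and changes nothing in the estimates.
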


Using the above, in ~\cite{cald}, Bellow and Calder\'on established the a.e convergence 
of $\ds\mu^n f(x)$ when $\ds\mu$ is strictly aperiodic, $\ds E(\mu)=0$ and $\ds m_2(\mu)<\infty$.
In ~\cite{losert} Losert shows that this result can not be extended to measures with $\ds m_p(\mu)<\infty$ for $\ds p<2$,
however, in ~\cite{belrjblatt} it was shown that if $\mu$ is symmetric and $\ds \mu(k)\geq \mu(k+1)$ for all $\ds k>0$
then $\mu^n f(x)$ converges a.e $\ds \forall f\in\mbox{L}^1(X)$. This implies that the measure given by $\ds\mu(k)=\frac{c}{|k|\log^2|k|}$
yields a.e convergence of $\ds \mu^n f(x)$ even though $\ds m_p(\mu)=\infty$ for all $\ds p>0$. The goal of this paper will be to extend the result in ~\cite{cald}
by weakening the second moment condition.

\noindent Translating the second moment condition into a statement concerning Fourier transforms we obtain 
\begin{eqnarray*}
\mu\:\mbox{has}\:\mbox{finite}\:\mbox{second}\:\mbox{moment}\:\Leftrightarrow\theta\:\mbox{is}\:\mbox{twice}\:\mbox{continuously}\:\mbox{differentiable}
\end{eqnarray*} 
Therefore we will seek a condition weaker than a continuous second derivative. The most obvious condition
would be that $\ds\theta'(t)\in\mbox{Lip}_{1}[-1/2,1/2]$, however
the following (\cite{moricz}) shows this extension to be vacuous,

\newtheorem{liplem}[foguel]{Theorem}
\begin{liplem}[Moricz]
Let $\ds f(x)=\sum_{k\in\mathbb{Z}}c_ke^{ikx}$. If $\ds \{c_k\}\subset\mathbb{C}$ is such that
$\ds\sum_{|k|\leq n}|kc_k|=O(n^{1-\alpha})$, $\ds n=1,2,\ldots,$ for some $\ds 0<\alpha\leq 1$,
then $\ds f\in\mbox{Lip}(\alpha)$. Conversely, let $\ds c_k$ be a sequence of real numbers such that $\ds k c_k\geq 0$
for all $\ds k\in\mathbb{Z}$. If $\ds \sum_{k\in\mathbb{Z}} |c_k|$ is finite and $\ds f\in\mbox{Lip}(\alpha)$ for
some $\ds 0<\alpha\leq 1$, then $\ds\sum_{|k|\leq n}|kc_k|=O(n^{1-\alpha})$. 
\end{liplem}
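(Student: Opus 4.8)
The plan is to treat the two implications separately, the first being a routine splitting argument and the second requiring the sign hypothesis $kc_k\ge 0$ in an essential way. For the forward implication I would write $f(x+h)-f(x)=\sum_k c_k e^{ikx}(e^{ikh}-1)$, so that $|f(x+h)-f(x)|\le \sum_k |c_k|\,|e^{ikh}-1|$, and then use $|e^{ikh}-1|\le \min(|kh|,2)$. Splitting the sum at $N=\lfloor 1/|h|\rfloor$, the low-frequency part is bounded by $|h|\sum_{|k|\le N}|kc_k|=|h|\,O(N^{1-\alpha})=O(|h|^{\alpha})$ directly from the hypothesis, while the high-frequency part $2\sum_{|k|>N}|c_k|=2\sum_{|k|>N}|kc_k|/|k|$ is handled by Abel summation against the partial sums $\sum_{|k|\le m}|kc_k|=O(m^{1-\alpha})$, giving $O(N^{-\alpha})=O(|h|^{\alpha})$. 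Adding the two pieces yields the Lipschitz bound uniformly in $x$.

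For the converse the first step is to reduce to a sine series with nonnegative coefficients. Setting $b_k=c_k-c_{-k}$ for $k\ge 1$, the hypothesis $kc_k\ge 0$ forces $c_k\ge 0$ and $c_{-k}\le 0$, so $b_k=|c_k|+|c_{-k}|\ge 0$ and $\sum_{|k|\le n}|kc_k|=\sum_{k=1}^{n}kb_k$. The odd part $g(x)=\tfrac{1}{2i}(f(x)-f(-x))=\sum_{k\ge 1}b_k\sin kx$ again lies in $\text{Lip}(\alpha)$ and vanishes at $0$, so $|g(x)|\le Cx^{\alpha}$. It therefore suffices to show that a sine series with nonnegative coefficients lying in $\text{Lip}(\alpha)$ satisfies $\sum_{k=1}^{n}kb_k=O(n^{1-\alpha})$.

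The hard part will be controlling the high-frequency tail, since positivity of the $b_k$ does not prevent $\sin kx$ from being negative. I would first establish the tail estimate $\sum_{k\ge n}b_k=O(n^{-\alpha})$ by integrating against a positive kernel. Since $\int_0^{1/n}\tfrac{\sin kx}{x}\,dx=\text{Si}(k/n)$, where $\text{Si}(t)=\int_0^t \tfrac{\sin u}{u}\,du>0$ for all $t>0$ and $\text{Si}(t)\ge c_0>0$ for $t\ge 1$, interchanging sum and integral (justified by the uniform convergence of the absolutely convergent series) gives $c_0\sum_{k\ge n}b_k\le \sum_k b_k\,\text{Si}(k/n)=\int_0^{1/n}\tfrac{g(x)}{x}\,dx\le C\int_0^{1/n}x^{\alpha-1}\,dx=O(n^{-\alpha})$, the last step using $|g(x)|\le Cx^{\alpha}$. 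This positive-kernel estimate is exactly what defeats the oscillation of the tail, and it is the crux of the whole argument.

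Finally, to recover the full sum I would evaluate $g$ at $x_n=\pi/(2n)$. For $1\le k\le 2n$ one has $0<kx_n\le \pi$, so every term $b_k\sin kx_n$ is nonnegative; restricting to $k\le n$ and applying Jordan's inequality $\sin t\ge \tfrac{2}{\pi}t$ on $[0,\pi/2]$ gives $\sum_{k=1}^{2n}b_k\sin kx_n\ge \tfrac{1}{n}\sum_{k=1}^{n}kb_k$. Hence $\tfrac{1}{n}\sum_{k=1}^{n}kb_k\le g(x_n)-\sum_{k>2n}b_k\sin kx_n\le Cx_n^{\alpha}+\sum_{k>2n}b_k=O(n^{-\alpha})$ by the tail estimate, and multiplying through by $n$ yields $\sum_{k=1}^{n}kb_k=O(n^{1-\alpha})$. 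This route works for every $0<\alpha\le 1$, including the endpoint $\alpha=1$, whereas a cruder Abel-summation passage from the tail estimate to the weighted sum would lose a logarithmic factor there; this is why I route the conclusion through the pointwise Jordan bound rather than through summation by parts alone.
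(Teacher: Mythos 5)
The paper does not prove this statement; it is quoted verbatim from M\'oricz's work (the citation \cite{moricz}) and used as a black box, so there is no in-paper argument to compare against. Judged on its own, your proof is essentially complete and correct, and it is a genuinely self-contained alternative to citing the literature. The forward direction (split at $N=\lfloor 1/|h|\rfloor$, bound $|e^{ikh}-1|$ by $\min(|kh|,2)$, Abel summation for the tail) is the standard argument and is fine, including at $\alpha=1$. The converse is the substantive part, and your three moves --- reduction to the sine series $g(x)=\sum_{k\ge 1}b_k\sin kx$ with $b_k=|c_k|+|c_{-k}|\ge 0$, the tail bound $\sum_{k\ge n}b_k=O(n^{-\alpha})$ via the positive kernel $\mathrm{Si}(k/n)=\int_0^{1/n}\frac{\sin kx}{x}\,dx$ with $\inf_{t\ge 1}\mathrm{Si}(t)=\mathrm{Si}(1)>0$, and the pointwise evaluation at $x_n=\pi/(2n)$ with Jordan's inequality --- all check out, and the last step correctly avoids the logarithmic loss at $\alpha=1$ that a pure summation-by-parts route would incur. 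The one place you should tighten the writing is the interchange of sum and integral in $\int_0^{1/n}\frac{g(x)}{x}\,dx=\sum_k b_k\,\mathrm{Si}(k/n)$: uniform convergence of $\sum b_k\sin kx$ does not by itself control $\sum b_k\frac{\sin kx}{x}$ near $x=0$, since $\sum_k kb_k$ may diverge. The repair is easy --- integrate over $[\varepsilon,1/n]$, where uniform convergence does apply, to get $\sum_k b_k\bigl(\mathrm{Si}(k/n)-\mathrm{Si}(k\varepsilon)\bigr)$, and then let $\varepsilon\to 0$ using dominated convergence on the coefficient side (dominated by $\bigl(\sup_t\mathrm{Si}(t)\bigr)\sum_k b_k<\infty$) and integrability of $g(x)/x=O(x^{\alpha-1})$ on the function side --- but as stated the justification is not the right one.
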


Although the extension $\ds \thf(t)\in\mbox{Lip}(1)$ is vacuous, the condition $\ds\thf(t)\in\mbox{Lip}(\alpha)$
for some $\ds 0<\alpha< 1$ will not be. We will construct examples of non-symmetric measures $\ds\mu$ with $\ds m_2(\mu)=\infty$ with $\ds\{\mu^n f(x)\}$
converging a.e. In fact, given $\ds p>1$, we will give examples of non-symmetric $\mu$
with $\ds E(\mu)=0$, $\ds m_p(\mu)=\infty$ and $\ds \{\mu^n f(x)\}$ converging a.e.


We will need the following.

\newtheorem{petrovlem}[foguel]{Theorem}
\begin{petrovlem}{(\cite{petrov})}
\label{petrov}
Let $\ds\theta(t)$ be the Fourier transform of a measure $\ds\mu$ on $\ds\mathbb{Z}$ not supported at a single integer.
Then there exist positive constants $\ds\delta$ and $\ds\epsilon$ such that $\ds|\theta(t)|\leq 1-\epsilon t^2$ for $\ds|t|\leq\delta$. Therefore 
for $\ds \mu$ strictl y aperiodic there exists a $C$
such that $\ds |\theta(t)|\leq e^{-Ct^2}\,\:\forall t\in[-1/2,1/2)$. 
\end{petrovlem}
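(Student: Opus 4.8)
The plan is to establish the local quadratic estimate first and then bootstrap it to the global exponential bound. For the local estimate I would work with $|\theta(t)|^{2}$ rather than with $\theta(t)$ itself, since $|\theta(t)|^{2}$ is the Fourier transform of the symmetric autocorrelation measure $\ds\nu(m)=\sum_{k\in\mathbb{Z}}\mu(k)\mu(k+m)$, i.e.\ the law of the difference of two independent $\mu$-distributed variables. Because $\nu$ is a symmetric probability measure, the imaginary parts cancel and one obtains the manifestly real, nonnegative expansion
\begin{equation*}
1-|\theta(t)|^{2}=\sum_{m\in\mathbb{Z}}\nu(m)\bigl(1-\cos(2\pi m t)\bigr)=2\sum_{m\in\mathbb{Z}}\nu(m)\sin^{2}(\pi m t).
\end{equation*}
Since $\mu$ is not concentrated at a single integer, it has two distinct support points $a\neq b$, so $\nu(a-b)\ge\mu(a)\mu(b)>0$; thus there is a lag $m_{0}\neq 0$ with $\nu(m_{0})>0$. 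As every summand above is nonnegative, I may discard all terms but this one.

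For $\ds|t|\le\delta:=1/(2|m_{0}|)$ the argument $\pi|m_{0}t|$ lies in $[0,\pi/2]$, so Jordan's inequality $\sin x\ge(2/\pi)x$ gives $\sin^{2}(\pi m_{0}t)\ge 4m_{0}^{2}t^{2}$, whence $1-|\theta(t)|^{2}\ge 8\nu(m_{0})m_{0}^{2}t^{2}$ on this range. Finally I convert this into a bound on $|\theta(t)|$ itself by factoring $1-|\theta(t)|^{2}=(1-|\theta(t)|)(1+|\theta(t)|)\le 2\bigl(1-|\theta(t)|\bigr)$, which yields $|\theta(t)|\le 1-\epsilon t^{2}$ with $\epsilon=4\nu(m_{0})m_{0}^{2}$, proving the first assertion.

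For the global bound I would glue together two regimes. On $|t|\le\delta$ the elementary inequality $1-x\le e^{-x}$ upgrades the local estimate to $|\theta(t)|\le 1-\epsilon t^{2}\le e^{-\epsilon t^{2}}$. On the complementary compact set $\{t:\delta\le|t|\le 1/2\}$ strict aperiodicity lets me invoke Foguel's Theorem, giving $|\theta(t)|<1$ for every $t\neq 0$; by continuity and compactness $|\theta|$ attains a maximum $M<1$ there, and since $t^{2}\le 1/4$ one has $M\le e^{-Ct^{2}}$ as soon as $C\le 4\log(1/M)$. Taking $C=\min\{\epsilon,\,4\log(1/M)\}>0$ renders both estimates compatible and gives $|\theta(t)|\le e^{-Ct^{2}}$ throughout $[-1/2,1/2)$.

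The computations are routine; the only genuine content is recognizing that one should symmetrize to $|\theta|^{2}$, so that the obstruction to $|\theta(t)|$ being close to $1$ is captured by a single nonnegative Fourier coefficient of $\nu$. The main point to watch is uniformity of the constants: the hypothesis ``not supported at a single integer'' is exactly what produces a nonzero lag $m_{0}$, allowing $\epsilon$ and $\delta$ to be chosen independently of $t$, while the stronger hypothesis of strict aperiodicity is precisely what is needed in the second regime to rule out $|\theta(t)|=1$ away from the origin.
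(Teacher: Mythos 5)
Your argument is correct in all details, and it is essentially the standard proof of this classical estimate (the symmetrization to $|\theta|^{2}$ via the autocorrelation measure, Jordan's inequality on a single nonzero lag, and a compactness argument away from the origin using Foguel's theorem). The paper itself offers no proof --- it quotes the result from the cited reference --- so there is nothing to compare beyond noting that your route matches the textbook one and that every constant is produced uniformly, as required.
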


\newtheorem{lemmathree}[foguel]{Lemma}
\begin{lemmathree}{(\cite{cald})}
\label{caldlem}
There is a constant $C>0$ such that, for any $\ds x,y\in\mathbb{Z}$, $\ds 0<2|y|<|x|$,
and $\ds t\in\mathbb{R}$
$$\left|\frac{e((x+y)t)-1}{(x+y)^2}-\frac{e(xt)-1}{x^2}\right|\leq C|t|\frac{|y|}{|x|^2}\: ,$$
where $\ds e(x)=e^{2\pi ix}$.
\end{lemmathree}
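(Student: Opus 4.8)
The plan is to treat the left-hand side as the increment of a smooth function of a continuous real variable and to estimate it by the fundamental theorem of calculus. Fixing $t$, I would introduce $g(u)=\frac{e(ut)-1}{u^{2}}$ for real $u\neq 0$, so that the quantity to be bounded is exactly $|g(x+y)-g(x)|$ (with the integer endpoints $x$ and $x+y$). Since $g$ is complex-valued the ordinary mean value theorem is unavailable, but its integral form remains valid: writing
$$g(x+y)-g(x)=y\int_{0}^{1}g'(x+sy)\,ds$$
reduces the problem to a pointwise bound on $|g'|$ along the segment joining $x$ to $x+y$.

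A direct differentiation gives
$$g'(u)=\frac{2\pi i\,t\,e(ut)}{u^{2}}-\frac{2\bigl(e(ut)-1\bigr)}{u^{3}}.$$
The first term has modulus $2\pi|t|/u^{2}$ because $|e(ut)|=1$. For the second term the key point is to use the sharp linear estimate $|e(ut)-1|\leq 2\pi|u|\,|t|$ (which follows from $|e^{i\theta}-1|\leq|\theta|$) rather than the crude bound $|e(ut)-1|\leq 2$; this is precisely what produces the factor $|t|$ on the right-hand side and yields $\bigl|2(e(ut)-1)/u^{3}\bigr|\leq 4\pi|t|/u^{2}$. Combining the two estimates gives $|g'(u)|\leq 6\pi|t|/u^{2}$.

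It remains to control $1/u^{2}$ uniformly on the segment of integration, and this is where the hypothesis $2|y|<|x|$ enters: for $s\in[0,1]$ we have $|x+sy|\geq|x|-|y|>|x|/2$, so in particular the segment never meets the origin and $1/(x+sy)^{2}<4/|x|^{2}$. Substituting into the integral bound gives
$$|g(x+y)-g(x)|\leq|y|\sup_{0\leq s\leq 1}|g'(x+sy)|\leq|y|\cdot\frac{6\pi|t|\cdot 4}{|x|^{2}}=24\pi|t|\frac{|y|}{|x|^{2}},$$
so that $C=24\pi$ works. No step here is genuinely hard; the only points requiring care are the use of the integral (rather than differential) mean value theorem for a complex-valued integrand, the verification via $2|y|<|x|$ that the segment of integration stays bounded away from $0$, and the choice of the linear bound for $|e(ut)-1|$ needed to recover the correct power of $|t|$ in the estimate.
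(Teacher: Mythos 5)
Your proof is correct. Note that the paper itself gives no proof of this lemma --- it is quoted from Bellow--Calder\'on \cite{cald} --- so there is no in-paper argument to compare against; the standard argument in the literature splits the difference into $\frac{e((x+y)t)-e(xt)}{(x+y)^2}+\bigl(e(xt)-1\bigr)\bigl(\frac{1}{(x+y)^2}-\frac{1}{x^2}\bigr)$ and bounds the two pieces separately using $|e(s)-1|\leq 2\pi|s|$ and $\bigl|\frac{1}{(x+y)^2}-\frac{1}{x^2}\bigr|\leq C\frac{|y|}{|x|^3}$. Your route packages the same two effects into a single derivative: with $g(u)=\frac{e(ut)-1}{u^2}$ the two terms of $g'(u)$ are exactly the oscillation term and the denominator-variation term, and the integral form of the mean value theorem replaces the discrete splitting. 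All the steps check: the differentiation of $g$ is right, the linear bound $|e^{i\theta}-1|\leq|\theta|$ gives $|g'(u)|\leq 6\pi|t|/u^2$, and the hypothesis $2|y|<|x|$ guarantees $|x+sy|>|x|/2$ on the whole segment, so the segment avoids the singularity at $0$ and $1/(x+sy)^2<4/|x|^2$, yielding $C=24\pi$. The only practical difference is that the discrete splitting works verbatim for integer arguments without interpolating along a real segment, while your version is marginally slicker and generalizes to real $x,y$ for free; either is a complete and elementary proof.
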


\noindent This paper contains results from the author's Ph.D dissertation. He would like to thank his adviser, Dr. Karin Reinhold,
for all her help and insightful comments throughout the process. The author would also like to thank the referee for the insightful comments and suggestions. In particular that of the use of the bounded angular ratio condition which appears throughout this paper, and how it could be used in the proof of Proposition ~\ref{resultcom}.

\section{Main Results}

Throughout we suppose that all measures $\mu$ have bounded angular ratio and $\ds m_1(\mu)<\infty$. Note that a constant $c$, independent of certain quantities, may change throughout an argument. Our main results are the following.

\theoremstyle{plain}
\newtheorem{mainsec}{Theorem}[section]
\begin{mainsec}
\label{mainsec}
Suppose $\mu$ is a probability on $\ds\mathbb{Z}$ with $\ds m_1(\mu)<\infty$ and 
bounded angular ratio and for some $\ds 0<\alpha\leq 1$ $\ds \sum_{|k|\leq n}k^2\mu(k)=O(n^{1-\alpha})$.
Suppose $\ds \ths(t)$ exists in some set $\ds 0<|t|<\delta$ and $\ds \mbox{Re}(\ths(t))=p(t)+O(1)<0$, where $\ds p(t)$ is non-decreasing in this set. Then $\ds \{\mu^n f(x)\}$ converges a.e. for all $\ds f\in\lp^{1}(X)$.
\end{mainsec}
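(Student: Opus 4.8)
The plan is to strip away the functional-analytic machinery until only a single pointwise estimate on the convolution kernels remains, and then to establish that estimate by Fourier inversion and integration by parts. Bounded angular ratio forces $\mu$ to be strictly aperiodic, and Lemma~\ref{newlemma}(a) gives $\ds E(\mu)=0$, so the dense-class convergence of Bellow--Calder\'on quoted above and the Banach principle (Theorem~\ref{ban}) both apply; by the Remark it therefore suffices to show that $\ds\mu^{\ast}$ is weak type $(1,1)$. The Calder\'on Transfer Principle reduces this to the weak $(1,1)$ bound for the discrete maximal operator $\ds M$ on $\ds l^1(\mathbb{Z})$, and Theorem~\ref{belcald} reduces \emph{that} to producing a constant $C$ independent of $n$ with
$$|\mu^n(x+y)-\mu^n(x)|\leq C\frac{|y|^{\alpha}}{|x|^{1+\alpha}}\qquad\text{for all }x,y\in\mathbb{Z}\text{ with }2|y|\leq|x|.$$
Everything comes down to this one inequality.

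First I would record what the hypotheses give. By the inversion formula $\ds\mu^n(k)=\ith\theta^n(t)e(-kt)\dt$, where $\ds e(x)=e^{2\pi ix}$. Applying Moricz's theorem to the coefficient sequence $\ds c_k=2\pi ik\mu(k)$ of $\ds\thf(t)$, the moment hypothesis $\ds\sum_{|k|\leq n}k^2\mu(k)=O(n^{1-\alpha})$ becomes exactly the condition $\ds\sum_{|k|\leq n}|kc_k|=O(n^{1-\alpha})$, so $\ds\thf(t)\in\mbox{Lip}(\alpha)$; since $\ds\thf(0)=2\pi iE(\mu)=0$ this yields $\ds|\thf(t)|\leq c|t|^{\alpha}$. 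Theorem~\ref{petrov} gives $\ds|\theta(t)|^n\leq e^{-Cnt^2}$, Lemma~\ref{newlemma}(b) gives $\ds|\ths(t)|\leq c|f''(t)|$ on $\ds 0<|t|<\delta$, and the hypothesis $\ds\mbox{Re}(\ths(t))=f''(t)=p(t)+O(1)<0$ with $p$ non-decreasing fixes both the size and the monotonicity of $\ds f''$ on that set.

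For the inequality itself I would write $\ds\mu^n(x+y)-\mu^n(x)=\ith\theta^n(t)(e(-(x+y)t)-e(-xt))\dt$ and integrate by parts twice. Integrating by parts once over the whole circle is harmless, since $\ds\thf$ exists everywhere (because $\ds m_1(\mu)<\infty$) and the boundary terms at $\ds\pm1/2$ cancel by periodicity and the matching factors $\ds(-1)^x$. I would then integrate by parts a second time over $\ds|t|<\delta$, where $\ds\ths$ exists; the boundary terms at $\ds\pm\delta$ and the leftover single-integration-by-parts integral over $\ds\delta<|t|<1/2$ are exponentially small in $n$ because $\ds|\theta(t)|\leq\rho<1$ there by Foguel's theorem, and are absorbed. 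What remains is $\ds\int_{|t|<\delta}(\theta^n)''(t)\,V(t)\dt$, where $V$ is exactly the kernel controlled by Lemma~\ref{caldlem}, namely $\ds|V(t)|\leq c|t|\,|y|/|x|^2$. Expanding $\ds(\theta^n)''=n(n-1)\theta^{n-2}(\theta')^2+n\theta^{n-1}\theta''$ and inserting the three inputs above turns the whole problem into estimating weighted integrals of the form $\ds\int|t|^{\beta}|\theta|^n|\theta''|\dt$ and $\ds n^2\int|t|^{1+2\alpha}e^{-Cnt^2}\dt$.

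The hard part will be uniformity in $n$. The bound just sketched is of order $\ds|y|\,|x|^{-2}n^{1-\alpha}$, which already beats the target when $\ds|x|\gtrsim n|y|$, while the crude direct estimate (no integration by parts) gives $\ds c|y|^{\alpha}n^{-(1+\alpha)/2}$, beating the target when $\ds|x|\lesssim\sqrt{n}$. Covering the intermediate range is the real difficulty, and a naive interpolation between these two bounds fails for $\ds\alpha<1$: in that range the Gaussian majorant from Theorem~\ref{petrov} is too lossy, and one must instead use the sign $\ds f''<0$ and the monotonicity of $p$ to estimate the $\ds\theta''$-integrals sharply, balancing the number of integrations by parts against the size of $\ds|x|$ relative to $n$. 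I expect this regime analysis, rather than any single integration-by-parts computation, to be the decisive and most delicate step of the proof.
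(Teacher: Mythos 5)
Your reduction---Banach principle plus the dense class of coboundaries, the Calder\'on transfer, and Theorem~\ref{belcald}---is exactly the paper's, as is the plan of Fourier inversion, a second integration by parts on $|t|<\delta$, and Lemma~\ref{caldlem} for the resulting kernel. But you stop precisely where the paper's real work begins, and your forecast of that work points in the wrong direction. After the second integration by parts the quantity to control is $\frac{|y|}{|x|^2}\int_{-\delta}^{\delta}|(\theta^n)''(t)|\,|t|\,\mbox{d}t$, and the decisive claim of the paper is that this integral is bounded \emph{uniformly in $n$}---not $O(n^{1-\alpha})$---so the large-$n$ regime already yields $C|y|/|x|^2\leq C|y|^{\alpha}/|x|^{1+\alpha}$ and no intermediate range is left over. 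The mechanism is the auxiliary function $\phi(t)=|f'(t)/t|$: Proposition~\ref{resultcom} upgrades the Gaussian majorant to $|\theta(t)|\leq 1-kt^2\phi(t)$ (using bounded angular ratio and $f''<0$); Lemma~\ref{lemone} gives $|\theta''(t)|\leq c\phi(t)$ and $|\theta'(t)/t|\leq c\phi(t)$ (via Lemma~\ref{newlemma}(b), which controls $g$ by $f$) together with $|t\phi'(t)|\leq\phi(t)$; and the subsequent Lemma bounds $n\int(1-kt^2\phi)^{n-1}|t|\phi\,\mbox{d}t$ and $n^2\int(1-kt^2\phi)^{n-2}|t|^3\phi^2\,\mbox{d}t$ by absolute constants through the substitution $u=1-kt^2\phi(t)$, whose differential $-(2kt\phi+kt^2\phi')\,\mbox{d}t$ is comparable to $-2kt\phi\,\mbox{d}t$ precisely because $|t\phi'|\leq\phi$. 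None of this appears in your proposal; without it the double-integration-by-parts bound really is of order $n^{1-\alpha}|y|/|x|^2$, leaving open exactly the gap you identify.

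The complementary regime is also organized differently from what you sketch. The paper does not use a crude no-integration-by-parts estimate on $|x|\lesssim\sqrt n$; it splits at $n=|x|^{\delta/8}$ and, for $n\leq|x|^{\delta/8}$, invokes the pointwise bound $|\mu^n(x)|\leq c\bigl(\sqrt n/|x|^{1+\delta}+n^2/|x|^2\bigr)$ of the first Proposition of Section~2---obtained from one integration by parts plus the shift $h=1/(2x)$ exploiting $\theta'\in\mbox{Lip}(\alpha)$---which gives $|\mu^n(x)|\leq C/|x|^{1+\sigma}\leq C|y|^{\sigma}/|x|^{1+\sigma}$ and hence the H\"older difference estimate term by term. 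Two regimes suffice and no interpolation is needed. So the skeleton of your argument is right, but there is a genuine gap: the uniform-in-$n$ estimate on $\int|(\theta^n)''(t)|\,|t|\,\mbox{d}t$, which is the actual analytic content of the theorem, is labelled ``the decisive and most delicate step'' and then not carried out.
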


\noindent Since symmetric, strictly aperiodic measures have real-valued Fourier transform they have bounded angular ratio $1$. Hence, we have the following corollary.

\newtheorem{mainseccor}[mainsec]{Corollary}
\begin{mainseccor}
Suppose $\mu$ is a strictly aperiodic, symmetric measure on $\ds\mathbb{Z}$ and for some $\ds 0<\alpha\leq 1$ $\ds \sum_{|k|\leq n}k^2\mu(k)=O(n^{1-\alpha})$.
Suppose $\ds \ths(t)$ exists in some set $\ds 0<|t|<\delta$, and $\ds \ths(t)=p(t)+O(1)<0$, where $\ds p(t)$ is non decreasing in this set. Then $\ds \{\mu^n f(x)\}$ converges a.e. for all $\ds f\in\lp^{1}(X)$.
\end{mainseccor}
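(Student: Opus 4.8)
The plan is to obtain the Corollary as a direct application of Theorem \ref{mainsec}; the only work is to check the two hypotheses of that theorem which are not stated verbatim in the Corollary---the bounded angular ratio condition and $m_1(\mu)<\infty$---follow from symmetry and strict aperiodicity together with the moment growth assumption, and then to observe that the remaining conditions transfer without change because $\theta$ is real-valued.

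First I would record that a symmetric measure has $\mu(k)=\mu(-k)$, whence $\theta(t)=\mu(0)+2\sum_{k>0}\mu(k)\cos(2\pi kt)$ is real and takes values in $[-1,1]$. Consequently $\hat\mu(t)-1=\theta(t)-1\le 0$, so $|\hat\mu(t)-1|=1-\theta(t)$. When $\theta(t)\ge 0$ the ratio $\frac{|\hat\mu(t)-1|}{1-|\hat\mu(t)|}=\frac{1-\theta(t)}{1-\theta(t)}$ equals $1$; this is in particular the situation for $t$ near $0$, since $\theta$ is continuous with $\theta(0)=1$. For the remaining values of $t$ one invokes Foguel's Theorem: strict aperiodicity gives $|\theta(t)|<1$ for $t\ne 0$, so by continuity $|\theta(t)|\le 1-\eta$ on any compact set avoiding $0$, which keeps $\frac{1-\theta(t)}{1+\theta(t)}$ bounded there. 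Combining, $\sup_{t\ne 0}\frac{|\hat\mu(t)-1|}{1-|\hat\mu(t)|}<\infty$, so $\mu$ has bounded angular ratio, the ratio being identically $1$ in the decisive regime $t\to 0$.

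Next I would derive $m_1(\mu)<\infty$ from the hypothesis $\sum_{|k|\le n}k^2\mu(k)=O(n^{1-\alpha})$ by Abel summation. Writing $A_n=\sum_{1\le k\le n}k^2\mu(k)=O(n^{1-\alpha})$ and using $k\mu(k)=k^2\mu(k)/k$, a summation by parts gives
\[
\sum_{1\le k\le N}k\mu(k)=\frac{A_N}{N}+\sum_{1\le k\le N-1}A_k\Bigl(\frac1k-\frac1{k+1}\Bigr),
\]
where the boundary term is $O(N^{-\alpha})\to 0$ and the series is dominated by $\sum_k O(k^{1-\alpha})\cdot k^{-2}=\sum_k O(k^{-1-\alpha})$, which converges for $\alpha>0$. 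By symmetry the sum over $k<0$ is identical, so $m_1(\mu)<\infty$.

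Finally, because $\theta$ is real we have $g\equiv 0$ in the decomposition $\theta=f+ig$, so $\theta''=f''$ is real and $\mathrm{Re}(\theta''(t))=\theta''(t)$. Thus the Corollary's hypotheses---that $\theta''$ exists on $0<|t|<\delta$, that $\theta''(t)=p(t)+O(1)<0$, and that $p$ is non-decreasing---are precisely the requirement $\mathrm{Re}(\theta''(t))=p(t)+O(1)<0$ with $p$ non-decreasing of Theorem \ref{mainsec}, while the moment growth condition is shared verbatim. All hypotheses of Theorem \ref{mainsec} being verified, its conclusion yields a.e.\ convergence of $\{\mu^n f(x)\}$ for every $f\in\lp^1(X)$. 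The only genuinely delicate point is the bounded angular ratio verification, specifically ensuring finiteness of the supremum on the set where $\theta(t)<0$; this is exactly what the compactness-plus-Foguel argument above supplies, and it is harmless for the application since the behavior driving Theorem \ref{mainsec} is the local one near $t=0$, where the ratio equals $1$.
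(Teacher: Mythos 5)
Your proposal is correct and follows essentially the same route as the paper, which disposes of the Corollary in one line by observing that a symmetric, strictly aperiodic measure has real-valued Fourier transform and hence bounded angular ratio, and then invokes Theorem~\ref{mainsec}. You supply two details the paper glosses over --- the finiteness of the angular ratio on the set where $\theta(t)<0$ (via Foguel's theorem and compactness, which corrects the paper's slightly imprecise claim that the ratio is identically $1$) and the deduction of $m_1(\mu)<\infty$ from $\sum_{|k|\le n}k^2\mu(k)=O(n^{1-\alpha})$ by Abel summation --- both of which are accurate and welcome.
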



\noindent The following gives examples of non symmetric measures $\mu$ with $\ds m_2(\mu)=\infty$ and 
$\ds\mu_n f(x)$ converging a.e. for all $\ds f\in\lp^1(X)$.

\theoremstyle{definition}

\newtheorem{ex1}[mainsec]{Example}

\begin{ex1}
Let $\ds \eta(k)=s/|k|^3$ , $\ds k\neq 0$ where $\ds s=\left(\sum 1/(|k|^3)\right)^{-1}$. Then
$\ds\hat{\eta}(t)=\sum_{k>0}\frac{s}{|k|^3}\cos(2\pi kt)$ and therefore
$\ds\hat{\eta}''(t)=-4\pi^2 s\sum_{k>0}1/k\cos(2\pi kt)$. We have $\ds\sum_{|k|\leq n}1/|k|=O(\log(n))$ and 
$\ds\sum\frac{1}{k}\cos(2\pi kt)=\log\left(\frac{1}{|2\sin(x/2)|}\right)$, (see \cite{zyg}). Hence, $\ds \eta''(t)=-4\pi^2 s\log\left(\frac{1}{|2\sin(t/2)|}\right)$
is monotone in a neighborhood of $0$. If $\ds\nu$ is a measure with $\ds E(\nu)=0$ and $\ds m_2(\nu)<\infty$ then
$\ds \mu=a_1\eta+(1-a_1)\nu$ with $\ds 0<a_1\leq 1$, will have $\ds\ths(t)=p(t)+O(1)(t)$ and by Theorem~\ref{mainsec} $\ds (\mu^nf)(x)$ converges a.e.
Note here that $\ds m_p(\mu)$ is finite if and only if $\ds p<2$.
\end{ex1}

\vspace{.01in}

\newtheorem{ex2}[mainsec]{Example}
\begin{ex2}
Let $\ds\eta(k)=s/|k|^{2+\sigma}$ for some $\ds 0<\sigma<1$, $\ds k\neq 0$ and $\ds s=\left(\sum 1/|k|^{2+\sigma}\right)^{-1}$.
Since $\ds\sum_{|k|\leq n}1/|k|^{\sigma}=O(n^{1-\sigma})$ and $\ds \hat{\eta}''(t)=$\\
$\ds\sum_{k=1}^{\infty}\frac{\cos(2\pi kt)}{k^{\alpha}}=\Gamma(1-\alpha)\sin(\frac{1}{2}\pi\alpha)t^{\alpha-1}+O(1)$, (see \cite{zyg}).
Then we can construct a measure $\mu=a_1\eta+(1-a_1)\nu$ where $\ds 0<a_1\leq 1$ and $\ds\nu$ is as in the previous example such that
$\ds \mu_n f(x)$ converges a.e. Note here that $\ds m_p(\mu)$ is finite if and only if $\ds p<1+\sigma$. 
\end{ex2}

\theoremstyle{plain}
\newtheorem{result1}[mainsec]{Proposition}
\begin{result1}
Suppose that the measure $\ds\mu$ satisfies the condition $\ds \sum_{|k|\leq n}k^2\mu(k)=O(n^{1-\delta})$, so that the Fourier 
transform $\ds\theta(t)$ has $\ds\thf(t)\in\mbox{Lip}_{\delta}$, for some $\ds 0<\delta\leq 1$, then
$$|\mu^n(x)|\leq c\left\{\frac{\sqrt{n}}{|x|^{1+\delta}}+\frac{n^2}{|x|^2}\right\}\: .$$

\end{result1}

\begin{proof}
\begin{eqnarray*}
|\mu^n(x)| & = & \left|\ith\theta^{n}(t)e^{-2\pi ixt}\,\mbox{d}t\right|\: ,\quad\mbox{by integration by parts}\\
           & = & \frac{1}{2\pi}\left|\frac{n}{x}\ith\theta^{n-1}(t)\thf(t)e^{-2\pi ixt}\,\mbox{d}t\right|\\
           & \leq & c\frac{n}{|x|}\left|\ith(\theta^{n-1}(t+h)\thf(t+h)-\theta^{n-1}(t)\thf(t))e^{-2\pi ixt}\,\mbox{d}t\right|,\quad \mbox{where}\: h=\frac{1}{2x}\\
           & \leq & c\frac{n}{|x|}\ith |\theta^{n-1}(t+h)||\thf(t+h)-\thf(t)|+|\thf(t)||\theta^{n-1}(t+h)-\theta^{n-1}(t)|\,\mbox{d}t \\
           & = & c\frac{n}{|x|} I_1 +c\frac{n}{|x|} I_2\: .
\end{eqnarray*}

Now we examine $\ds I_1$ and $\ds I_2$ separately. By the Lipschitz property of $\ds\thf(t)$
\begin{eqnarray*}
I_1 & = & \frac{1}{x^{\delta}}\ith |\theta^{n-1}(t+h)|x^{\delta}|\thf(t+h)-\thf(t)|\,\mbox{d}t\\
& \leq & \frac{c}{x^{\delta}}\ith |\theta^{n-1}(t+h)|\,\mbox{d}t\\
& \leq &  \frac{c}{x^{\delta}}\frac{1}{\sqrt{n}}\: ,\quad\mbox{by}~\ref{petrov}\: .
\end{eqnarray*}

Therefore,
$$ \frac{cn}{|x|}I_1\leq \frac{c\sqrt{n}}{x^{1+\delta}}\: ,$$

\begin{eqnarray*}
I_2 & = & \frac{1}{|x|}\ith |\thf(t)||x||\theta^{n-1}(t+h)-\theta^{n-1}(t)|\,\mbox{d}t\\
& \leq & c\frac{n-1}{|x|}\ith |\thf(t)||\thf(c(t))||\theta^{n-2}(c(t))|\,\mbox{d}t\leq \frac{cn}{|x|}
\end{eqnarray*}
\noindent for some $\ds c(t)$ between $t$ and $t+h$ by the mean value theorem.

\noindent Therefore 
$$\frac{cn}{|x|}I_2\leq \frac{cn^{2}}{|x|^2}.$$
Hence
$$|\mu_n(x)|\leq\frac{c_1\sqrt{n}}{|x|^{1+\delta}}+\frac{c_2 n^{2}}{|x|^2}\: .$$

\end{proof}

\newtheorem{cor}[mainsec]{Corollary}
\begin{cor}
\label{corm}
Let $\ds\sigma=\min\left\{\frac{15\delta}{16},\frac{3}{4}\right\}$ then $\ds\sigma>0$ and 
if $\ds n\leq |x|^{\frac{\delta}{8}}$, 
$$ |\mu_n(x)|\leq \frac{C}{|x|^{1+\sigma}}\leq \frac{C|y|^{\sigma}}{|x|^{1+\sigma}},$$ $\ds\forall y\in\mathbb{Z}$, $\ds y\neq 0$.
\end{cor}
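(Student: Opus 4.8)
The plan is to feed the bound from the preceding Proposition,
$$|\mu^n(x)|\leq c\left\{\frac{\sqrt{n}}{|x|^{1+\delta}}+\frac{n^2}{|x|^2}\right\},$$
into the hypothesis $\ds n\leq |x|^{\delta/8}$ and to track the exponent of $\ds|x|$ in each term separately. The entire content is the verification that the single exponent $\ds 1+\sigma$ simultaneously dominates both contributions; there is no analytic difficulty beyond bookkeeping.

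First I would bound the two pieces. Under $\ds n\leq |x|^{\delta/8}$ one has $\ds\sqrt{n}\leq |x|^{\delta/16}$, so the first term satisfies
$$\frac{\sqrt{n}}{|x|^{1+\delta}}\leq \frac{|x|^{\delta/16}}{|x|^{1+\delta}}=\frac{1}{|x|^{1+15\delta/16}}.$$
Likewise $\ds n^2\leq |x|^{\delta/4}$, so the second term satisfies
$$\frac{n^2}{|x|^2}\leq \frac{|x|^{\delta/4}}{|x|^2}=\frac{1}{|x|^{2-\delta/4}}.$$
Everything thus reduces to comparing the two exponents $\ds 1+15\delta/16$ and $\ds 2-\delta/4$ against the target exponent $\ds 1+\sigma$.

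Next I would check the two required inequalities. For the first term I need $\ds 1+\sigma\leq 1+15\delta/16$, i.e. $\ds\sigma\leq 15\delta/16$, which holds immediately from the definition $\ds\sigma=\min\{15\delta/16,\,3/4\}$. For the second term I need $\ds 1+\sigma\leq 2-\delta/4$, i.e. $\ds\sigma\leq 1-\delta/4$; since $\ds\sigma\leq 3/4$ and $\ds 0<\delta\leq 1$ forces $\ds 1-\delta/4\geq 3/4$, this holds as well. Because $\ds|x|\geq 2$ in the relevant range (the estimate is applied with $\ds 2|y|\leq |x|$, $\ds y\neq 0$), a larger exponent yields a smaller value, so each term is at most $\ds|x|^{-(1+\sigma)}$ and their sum gives $\ds|\mu^n(x)|\leq C|x|^{-(1+\sigma)}$, which is the first claimed inequality. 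The second inequality is then trivial: any $\ds y\in\mathbb{Z}$ with $\ds y\neq 0$ has $\ds|y|\geq 1$, whence $\ds|y|^{\sigma}\geq 1$ because $\ds\sigma>0$, so multiplying the right-hand side by $\ds|y|^{\sigma}$ only enlarges it.

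The only step that genuinely forces the stated form of $\ds\sigma$ is the treatment of the second term, and this is the place where a naive choice would fail. The truncation at $\ds 3/4$ is essential rather than cosmetic: if one simply took $\ds\sigma=15\delta/16$, then at $\ds\delta=1$ the requirement $\ds\sigma\leq 1-\delta/4=3/4$ would be violated, since $\ds 15/16>3/4$, and the quadratic-in-$n$ term $\ds n^2/|x|^2$ could not be absorbed. So the main (still elementary) obstacle is recognizing that the cap at $\ds 3/4$ exists precisely to control the $\ds n^2$ term when $\ds\delta$ is near $1$, while the branch $\ds 15\delta/16$ governs the small-$\ds\delta$ regime in which the first term is the binding constraint.
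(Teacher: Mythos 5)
Your proposal is correct and is exactly the intended argument: the paper states this corollary without a separate proof, as an immediate consequence of substituting $n\leq |x|^{\delta/8}$ into the bound $|\mu^n(x)|\leq c\{\sqrt{n}/|x|^{1+\delta}+n^2/|x|^2\}$ from the preceding Proposition, and your exponent bookkeeping (including the observation that the cap at $3/4$ is what absorbs the $n^2/|x|^2$ term when $\delta$ is near $1$) matches that computation. No issues.
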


\newtheorem{resultcom}[mainsec]{Proposition}
\begin{resultcom}
\label{resultcom}
Suppose there is a set $(-\delta,\delta)-\{0\}$ on which $\ds\theta(t)$ is twice differentiable
and assume that if $\ds\theta(t)=f(t)+i g(t)$, 
we have $\ds f''(t)=p(t)+O(1)<0$ where $p(t)$ is non-decreasing on $\ds (-\delta,\delta)-\{0\}$
and $\ds p(t)\rightarrow -\infty$ as $\ds t\rightarrow 0$.
Then, for $\ds t$ in $\ds (-\delta,\delta)$, there exists a positive constant $c$ such that
$$|\theta(t)|\leq 1-c\phi(t)t^2$$ 
where $\ds\phi(t)=\left|\frac{f'(t)}{t}\right|$.
\end{resultcom}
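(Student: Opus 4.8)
The plan is to estimate $|\theta(t)|^2 = f(t)^2 + g(t)^2$ near $t=0$ by Taylor-expanding both the real and imaginary parts, and then to show that the quadratic correction is controlled by $\phi(t)t^2$. First I would record the relevant boundary data: since $m_1(\mu)<\infty$ we have $\theta(0)=1$, so $f(0)=1$ and $g(0)=0$, and by Lemma~\ref{newlemma}(a), $E(\mu)=0$ gives $\theta'(0)=0$, hence $f'(0)=0$ and $g'(0)=0$. The bounded angular ratio hypothesis lets me invoke Lemma~\ref{newlemma}(b): $|g'(t)|\leq c_1|f'(t)|$ and $|g''(t)|\leq c_2|f''(t)|$ on $S$. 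These inequalities are the whole point of working with bounded angular ratio, because they say the imaginary part $g$ is never larger, up to a constant, than the real part $f$; this is what will prevent the imaginary part from spoiling the decay coming from $f$.

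The main computation is then to write, for small $t>0$,
\begin{eqnarray*}
|\theta(t)|^2 = f(t)^2 + g(t)^2,
\end{eqnarray*}
and to bound $f(t)$ and $g(t)$ via the mean value theorem and the integral of their derivatives. Using $f(0)=1$, $f'(0)=0$ I would write $f(t) = 1 + \int_0^t f'(s)\,\mbox{d}s$ and control $f'(s) = \int_0^s f''(u)\,\mbox{d}u$; since $f''<0$ on the punctured neighborhood, $f'(t)$ has a fixed sign and $1-f(t)$ is positive and comparable to $\int_0^t|f'(s)|\,\mbox{d}s$. The key quantitative link is the definition $\phi(t)=|f'(t)/t|$, so that $|f'(t)| = \phi(t)|t|$ and thus the decrease of $f$ below $1$ is of order $\phi(t)t^2$. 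For the imaginary part, $g(0)=g'(0)=0$ together with $|g'(t)|\leq c_1|f'(t)|$ gives $|g(t)|\leq c_1\int_0^t|f'(s)|\,\mbox{d}s$, so $g(t)^2$ is of smaller order than the linear-in-$(1-f)$ term. Assembling these, $|\theta(t)|^2 \leq 1 - 2(1-f(t)) + (1-f(t))^2 + g(t)^2 \leq 1 - c\,\phi(t)t^2$ after absorbing the higher-order terms, and taking square roots with $\sqrt{1-u}\leq 1-u/2$ yields the claimed bound $|\theta(t)|\leq 1 - c\phi(t)t^2$.

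The step I expect to be the main obstacle is making the comparison $1 - f(t) \geq c\,\phi(t)t^2$ rigorous, since $\phi(t)$ is itself built from $f'(t)$ rather than from an average of $f'$ over $[0,t]$. Because $f''(t)=p(t)+O(1)$ with $p$ non-decreasing and $p(t)\to-\infty$, the monotonicity of $p$ is exactly what I would exploit: $|f'(t)| = \big|\int_0^t f''(u)\,\mbox{d}u\big|$ is an average of $|f''|$ over $[0,t]$, and monotonicity of $p$ lets me compare this average to the endpoint value, so that $\int_0^t |f'(s)|\,\mbox{d}s$ and $t\,|f'(t)| = \phi(t)t^2$ are comparable up to a constant and up to the $O(1)$ perturbation. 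Controlling the $O(1)$ term against the divergent $p(t)$, and checking that the sign of $f'$ is consistent on the whole interval so that no cancellation occurs in $1-f(t)$, are the delicate points; everything else is routine Taylor estimation.
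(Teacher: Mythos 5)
Your proof is correct, and its heart --- the lower bound $1-f(t)\geq c\,\phi(t)t^{2}$, obtained by comparing the average of $|f'|$ over $[0,t]$ with the endpoint value $|f'(t)|$ via the monotonicity of $p$ and absorbing the $O(1)$ term into the divergent $p(t)$ --- is exactly the paper's key step: the paper restricts the integral to $[t/2,t]$, bounds it below by $\tfrac{|t|}{2}|f'(t/2)|$, and uses that $|f'(t/2)/f'(t)|$ is bounded below, which is the same comparison you describe. Where you genuinely diverge is the reduction from $|\theta(t)|$ to $1-f(t)$: the paper gets this in one line directly from the definition of bounded angular ratio, $|\theta(t)|\leq 1-c|1-\theta(t)|\leq 1-c|1-f(t)|$, whereas you square, write $|\theta(t)|^{2}=1-2(1-f(t))+(1-f(t))^{2}+g(t)^{2}$, and control the imaginary part through Lemma~\ref{newlemma}(b), using $|g(t)|\leq c_{1}\int_{0}^{|t|}|f'(s)|\,\mbox{d}s = c_{1}(1-f(t))$ so that $g(t)^{2}=o(1-f(t))$. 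Both routes are valid; the paper's is shorter and invokes the angular-ratio hypothesis only once at the level of $\theta$ itself, while yours makes the role of the imaginary part explicit and needs only the derivative-level comparison of $g$ with $f$. The one point to watch in your version is that absorbing $(1+c_{1}^{2})(1-f(t))^{2}$ into $2(1-f(t))$ and then taking square roots requires shrinking the neighborhood so that $1-f(t)$ is small; the paper's argument implicitly shrinks $\delta$ as well, so this is a matter of bookkeeping rather than a gap.
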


\begin{proof}
By bounded angular ratio 
\begin{eqnarray*}
|\theta(t)| & \leq & 1-c|1-\theta(t)| \\
            & \leq & 1-c|1-f(t)|. \\
\end{eqnarray*} 
Since $\ds f''(t)<0$, $\ds f'(0)=0$ and $\ds p(t)\rightarrow -\infty$
implies $\ds \left|\frac{f'(t/2)}{f'(t)}\right|$ is bounded below, we have that the above quantity

\begin{eqnarray*}
 & = & 1-c\left|\int_{0}^{t} f'(s)\,\mbox{d}s\right| \\
 & \leq & 1-c\left|\int_{t/2}^{t}f'(s)\,\mbox{d}s\right| \\
 & \leq & 1-c\frac{|t|}{2}\left|f'(t/2)\right| \\
 & \leq & 1-c't^2\left|\frac{f'(t)}{t}\right|\, .
\end{eqnarray*}
\end{proof}

\theoremstyle{plain}
\newtheorem{newlem}[mainsec]{Lemma}
\begin{newlem}
\label{lemone}
Suppose that $\ds \mbox{Re}(\ths(t))=f''(t)=p(t)+O(1)<0$ 
on a set $\ds S=(-\delta,\delta)-\{0\}$ where $p(t)$ is non-increasing as $\ds |t|\rightarrow 0$, and $\ds p(t)\rightarrow -\infty$ as $\ds t\rightarrow 0$. Then the function defined by $\ds\phi(t)=\left|\frac{f'(t)}{t}\right|$ 
satisfies the following properties for all $\ds t$ in some set $\ds [-\delta',\delta']-\{0\}$,
\begin{enumerate}
\item $\ds\phi(t)=\phi(-t)$ 
\item There exist constants $\ds 2>c_1>1$, $\ds c_2>0$, $\ds c_3>0$ such that 
$\ds c_1\phi(t)\geq |f''(t)|$, $\ds c_2\phi(t)\geq |\theta''(t)|$, $\ds c_3\phi(t)\geq \left|\frac{\theta'(t)}{t}\right|$ 
\item $\ds |t\phi(t)|\rightarrow 0$ as $\ds t\rightarrow 0$ and
$\ds |t\phi'(t)|\leq \phi(t)$
\end{enumerate}
\end{newlem}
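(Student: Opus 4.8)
The plan is to base all three parts on a single integral representation of $\phi$. Since $f(t)=\mathrm{Re}\,\theta(t)=\sum_k\mu(k)\cos(2\pi kt)$ is even in $t$, its derivative $f'$ is odd, so $\phi(-t)=|f'(-t)/(-t)|=|f'(t)/t|=\phi(t)$; this settles (1) immediately. By Lemma~\ref{newlemma}(a) we have $E(\mu)=0$, hence $f'(0)=0$, and since $m_1(\mu)<\infty$ the transform $\theta$ is continuously differentiable, so $f'$ is continuous with $f'(0)=0$. Because $f''<0$ on $S$, for $t>0$ we get $f'(t)=\int_0^t f''(s)\,ds<0$, and therefore
\[
\phi(t)=\left|\frac{f'(t)}{t}\right|=\frac{1}{t}\int_0^t|f''(s)|\,ds,\qquad t>0,
\]
exhibiting $\phi(t)$ as the average of $|f''|$ over $[0,t]$. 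This formula is the workhorse for the remaining parts.

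For (2) the key step is the two–sided comparison of $\phi$ with $|f''|$ using a constant strictly below $2$. Writing $|f''(s)|=|p(s)|+O(1)$, recall $p<0$ with $p\to-\infty$ as $t\to0$, so $|p|=-p$ is non-increasing in $|t|$ and tends to $\infty$. Fixing $M$ with $\bigl||f''(s)|-|p(s)|\bigr|\le M$ and shrinking to $[-\delta',\delta']$ on which $|p|\ge 5M$, the monotonicity of $|p|$ gives $|f''(s)|\ge|p(s)|-M\ge|p(t)|-M$ for $0<s\le t$, whence $\phi(t)\ge|p(t)|-M$, while $|f''(t)|\le|p(t)|+M$. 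Taking $c_1=3/2$ one checks $c_1\phi(t)-|f''(t)|\ge(c_1-1)|p(t)|-(c_1+1)M\ge0$, so $c_1\phi(t)\ge|f''(t)|$ with $1<c_1<2$. The other two inequalities of (2) then follow from Lemma~\ref{newlemma}(b): since $|g''|\le c|f''|$ and $|g'|\le c'|f'|$, we obtain $|\theta''|\le|f''|+|g''|\le(1+c)c_1\phi$ and $|\theta'/t|\le(1+c')|f'/t|=(1+c')\phi$, which provide $c_2$ and $c_3$.

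For (3), the first claim is immediate: $|t\phi(t)|=|f'(t)|\to0$ as $t\to0$ by continuity of $f'$ and $f'(0)=0$. For the derivative bound it suffices, by evenness, to treat $t>0$, where $\phi(t)=-f'(t)/t$ and so
\[
t\phi'(t)=-f''(t)+\frac{f'(t)}{t}=|f''(t)|-\phi(t).
\]
Hence $|t\phi'(t)|\le\phi(t)$ is equivalent to $0\le|f''(t)|\le2\phi(t)$, and the right–hand inequality is precisely $|f''(t)|\le c_1\phi(t)<2\phi(t)$ from (2). Thus the whole lemma hinges on the constant $c_1<2$ in part (2); this is the one genuinely nontrivial point, and it is delivered by the monotonicity of $p$ (which forces the average of $|f''|$ over $[0,t]$ to dominate its endpoint value) together with $p\to-\infty$ (which lets $|p(t)|$ absorb the bounded error and drives the admissible constant down toward $1$).
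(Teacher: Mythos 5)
Your proof is correct and follows essentially the same route as the paper: evenness of $f$ for (1), a comparison $\phi(t)\geq|p(t)|-M$ obtained from the monotonicity of $p$ (you via the integral average of $|f''|$ over $[0,t]$, the paper via the mean value theorem, which is the same idea), the divergence $\phi\to\infty$ to absorb the $O(1)$ and get $c_1<2$, Lemma~\ref{newlemma}(b) for the bounds involving $\theta'$ and $\theta''$, and the identity $t\phi'(t)=|f''(t)|-\phi(t)$ combined with $|f''|\leq c_1\phi<2\phi$ for (3). Your explicit choice $c_1=3/2$ under $|p|\geq 5M$ is a clean quantitative version of the paper's argument.
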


\begin{proof}
We have $\ds f(t)=\sum_{k\geq 0}c_k\cos(2\pi kt)$ with $\ds c_k\geq 0$ and therefore the first assertion is trivial.
Since $\ds f'(0)=0$, $\ds|t\phi(t)|\rightarrow 0$.
We have, as $\ds f'(0)=0$ for $\ds c(t)$
between $0$ and $t$, 
\begin{eqnarray*}
\phi(t) & = & \left|\frac{f'(t)}{t}\right|=|f''(c(t))|=|p(c(t))+O(1)|\\
& \geq & |p(t)+O(1)|
\end{eqnarray*}  
Hence for some $\ds 2>c_1>1$, since $\ds \phi(t)\rightarrow\infty$ as $\ds t\rightarrow 0$,
\begin{eqnarray*}
c_1\phi(t) & \geq & |p(t)|+(c_1-1)\phi(t)-|O(1)|\\
& \geq & |p(t)|+|O(1)|\geq |p(t)+O(1)|\\
& = & |f''(t)|.
\end{eqnarray*}
Since by~\ref{newlemma} (b) $\ds |g''(t)|\leq c|f''(t)|$, we have $\ds |g''(t)|\leq c'\phi(t)$. So $\ds |\ths(t)|\leq |g''(t)|+|f''(t)|\leq c_2\phi(t)$. Observe that,
by ~\ref{newlemma} (b)
\begin{eqnarray*}
\left|\frac{\thf(t)}{t}\right| & \leq & \left|\frac{g'(t)}{t}\right|+\left|\frac{f'(t)}{t}\right|\\
                               & \leq & c\left|\frac{f'(t)}{t}\right|+\left|\frac{f'(t)}{t}\right|\\
                               & = & c\phi(t)\, . 
\end{eqnarray*}
Therefore the second assertion has been established.
Now $\ds\phi(t)=-\frac{f'(t)}{t}$ so
$\ds \phi'(t)=\frac{-f''(t)t+f'(t)}{t^2}$ and $\ds t\phi'(t)=-f''(t)+\frac{f'(t)}{t}$ which implies 
\begin{eqnarray*}
|t\phi'(t)| & = & \left|\frac{f'(t)}{t}-f''(t)\right| \\
            & = & \left||f''(t)|-\left|\frac{f'(t)}{t}\right|\right|\\
            & \leq & \left|2\phi(t)-\phi(t)\right|=\phi(t)\: .
\end{eqnarray*} 
Thus the third assertion follows.

\end{proof}

\newtheorem{secondlemma}[mainsec]{Lemma}
\begin{secondlemma}
For a function $\ds\phi(t)$ satisfying the properties of Lemma ~\ref{lemone}, 
the following hold.
\begin{enumerate}
\item 
\begin{eqnarray*}
n\int_{-\delta}^{\delta}(1-kt^2\phi(t))^{n-1}|t|\phi(t)< C
\end{eqnarray*}
\item
\begin{eqnarray*}
n^2\int_{-\delta}^{\delta}(1-kt^2\phi(t))^{n-2}|t|^3\phi^2(t)< C
\end{eqnarray*}
\end{enumerate}
where $C$ is independent of $n$, and $\ds 0\leq 1-kt^2\phi(t)\leq 1$ on $(-\delta,\delta)$.
\end{secondlemma}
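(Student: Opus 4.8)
The plan is to reduce both integrals to elementary ones by the single substitution $u = G(t)$, where $G(t) = 1 - kt^2\phi(t)$, exploiting the fact that by property (3) of Lemma~\ref{lemone} the function $G$ is strictly monotone on $(0,\delta)$ with $|G'(t)|$ comparable to $k\,t\phi(t)$. By the evenness of $\phi$ (property (1)) it suffices to work on $(0,\delta)$ and double the resulting bounds, so I would restrict attention there throughout.

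First I would compute $G'(t) = -k\bigl(2t\phi(t) + t^2\phi'(t)\bigr)$. For $t>0$ one writes $t^2\phi'(t) = t\,(t\phi'(t))$ and invokes $|t\phi'(t)| \le \phi(t)$ from property (3) to obtain $t\phi(t) \le 2t\phi(t) + t^2\phi'(t) \le 3t\phi(t)$, hence
$$k\,t\phi(t) \le |G'(t)| \le 3k\,t\phi(t), \qquad G'(t) < 0 \text{ on } (0,\delta).$$
Thus $G$ decreases from $G(0^+) = 1$ (using $t^2\phi(t) = |t|\cdot|t\phi(t)| \to 0$, again property (3)) down to $G(\delta) =: a$, where $a \ge 0$ by the standing hypothesis $0 \le 1 - kt^2\phi(t)$. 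This comparison is the crux of the argument: it converts the weight $t\phi(t)\,dt$ into a constant multiple of $|dG|$, and the strict positivity of the lower bound $2t\phi - t\phi = t\phi$ is precisely where the bound $c_1 < 2$ from property (2) (equivalently $|t\phi'(t)|\le\phi(t)$) is essential.

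For (1), I would bound $t\phi(t) \le k^{-1}|G'(t)|$ and substitute $u = G(t)$, giving
$$ n\int_0^\delta (1 - kt^2\phi(t))^{n-1}\, t\phi(t)\,dt \le \frac{n}{k}\int_a^1 u^{n-1}\,du = \frac{1 - a^n}{k} \le \frac{1}{k}.$$
For (2), I would first rewrite $kt^2\phi(t) = 1 - G(t)$, so that $t^3\phi^2(t) = (t^2\phi(t))\,(t\phi(t)) \le k^{-2}\,(1 - G(t))\,|G'(t)|$, and substitute again:
$$ n^2\int_0^\delta (1 - kt^2\phi(t))^{n-2}\, t^3\phi^2(t)\,dt \le \frac{n^2}{k^2}\int_a^1 u^{n-2}(1-u)\,du \le \frac{n^2}{k^2}\cdot\frac{1}{n(n-1)} \le \frac{2}{k^2}$$
for $n \ge 2$, the penultimate inequality being the elementary evaluation $\int_a^1 u^{n-2}(1-u)\,du = \tfrac{1}{n(n-1)} - a^{n-1}\bigl(\tfrac{1}{n-1} - \tfrac{a}{n}\bigr) \le \tfrac{1}{n(n-1)}$. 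Doubling by symmetry then yields a bound $C$ independent of $n$ in each case.

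I expect the only genuine obstacle to be the monotonicity and comparability step for $G'$; once $|G'| \asymp k\,t\phi$ is established, both estimates are routine changes of variable reducing to a geometric integral and a Beta-type integral. A secondary point worth recording is that $G$ is genuinely one-to-one on $(0,\delta)$, so that the substitution is legitimate, but this is immediate from $G' < 0$ there.
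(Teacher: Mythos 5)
Your proof is correct and follows essentially the same route as the paper: both arguments hinge on the observation that $\frac{d}{dt}\bigl(1-kt^2\phi(t)\bigr)=-k\bigl(2t\phi(t)+t^2\phi'(t)\bigr)$ is comparable to $-kt\phi(t)$ because of the bound $|t\phi'(t)|\le\phi(t)$, after which the substitution $u=1-kt^2\phi(t)$ reduces (1) and (2) to $\int_a^1 u^{n-1}\,du$ and $\int_a^1 u^{n-2}(1-u)\,du$ respectively. Your packaging via $|G'|\asymp kt\phi$ is a somewhat cleaner presentation of the same idea (the paper instead adds and subtracts the $t^2\phi'$ term and absorbs the error at the cost of a factor $\tfrac12$), but it is not a genuinely different proof.
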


\begin{proof}
\begin{enumerate}
\item
\mbox{}
\begin{eqnarray*}
n\int_{-\delta}^{\delta} (1-kt^2\phi(t))^{n-1}|t|\phi(t)\dt & =
&   2n\int_{0}^{\delta}(1-kt^2\phi(t))^{n-1}t\phi(t)\dt\\
& = & \frac{2n}{-2k}\int_{0}^{\delta}(1-kt^2\phi(t))^{n-1}(-2kt\phi(t))\dt \\
& = & \frac{n}{-k}\int_{0}^{\delta}(1-kt^2\phi(t))^{n-1}(-2kt\phi(t)-kt^2\phi'(t))\dt\\
& + & \left(\frac{n}{k}\right)\int_{0}^{\delta}(1-kt^2\phi(t))^{n-1}(-kt^2\phi'(t))\dt\: .
\end{eqnarray*}
Therefore 
\begin{eqnarray*}
2n\int_0^{\delta}(1-kt^2\phi(t))^{n-1}t\phi(t)\dt & - & \frac{n}{k}\int_0^{\delta}(1-kt^2\phi(t))^{n-1}(-kt^2\phi'(t))\dt=\\
& =  & -\frac{n}{k}\int_0^{\delta}(1-kt^2\phi(t))^{n-1}(-2kt\phi(t)-kt^2\phi'(t))\dt \\
& = & \frac{n}{k}\int_{1-k\delta^2\phi(\delta)}^{1}u^{n-1}\,\mbox{d}u\leq C\: . 
\end{eqnarray*}
However, since $\ds |t\phi'(t)|\leq\phi(t)$ we have
\begin{eqnarray*}
2n\int_0^{\delta}(1-kt^2\phi(t))^{n-1}t\phi(t)\dt & + & n\int_0^{\delta}(1-kt^2\phi(t))^{n-1}t^2\phi'(t)\dt \\
& \geq & n\int_0^{\delta} (1-kt^2\phi(t))^{n-1}t\phi(t)\dt \\
& = & \frac{1}{2}n\int_{-\delta}^{\delta}(1-kt^2\phi(t))^{n-1}|t|\phi(t)\dt\: .
\end{eqnarray*}
The claim follows.

\item
\mbox{}
\begin{eqnarray*}
\lefteqn{n^2\int_{-\delta}^{\delta} (1-kt^2\phi(t))^{n-2}|t|^3\phi^2(t)\dt  =}\\
& =&   2n^2\int_{0}^{\delta}(1-kt^2\phi(t))^{n-2}t^3\phi^2(t)\dt\\
& = & 2n^2\int_{0}^{\delta}(1-kt^2\phi(t))^{n-2}t\phi(t)t^2\phi(t)\dt \\
& = & \frac{n^2}{-k^2}\int_{0}^{\delta}(1-kt^2\phi(t))^{n-2}(-2kt\phi(t)-kt^2\phi'(t))t^2\phi(t)\dt \\
& + & \left(\frac{n^2}{k}\right)\int_{0}^{\delta}(1-kt^2\phi(t))^{n-2}(-kt^2\phi'(t))t^2\phi(t)\dt
\end{eqnarray*}

Therefore by similar arguments as in the first part of this lemma, we have,
\begin{eqnarray*}
\lefteqn{\left|\frac{1}{2}n^2\int_{-\delta}^{\delta}(1-kt^2\phi(t))^{n-2}t^3\phi^2(t)\dt\right|  \leq}  \\
& \leq & -\frac{n^2}{k}\int_0^{\delta}(1-kt^2\phi(t))^{n-2}(-2kt\phi(t)-kt^2\phi'(t))t^2\phi(t)\dt\\
& = & \frac{n^2}{k}\int_{1-k\delta^2\phi(\delta)}^1u^{n-2}(1-u)\,\mbox{d}u\\
& = & \frac{n^2}{k^2}\int_{1-k\delta^2\phi(\delta)}^1 u^{n-2}-u^{n-1}\,\mbox{d}u
\end{eqnarray*}
\begin{eqnarray*}
& = & \frac{n^2}{k^2}\left(\left.\frac{u^{n-1}}{n-1}-\frac{u^n}{n}\right|_{1-k\delta^2\phi(\delta)}^1\right)\\
& = & \frac{n^2}{k^2}\left(\frac{1}{n-1}-\frac{1}{n}+\frac{(1-k\delta^2\phi(\delta))^n}{n}-\frac{(1-k\delta^2\phi(\delta))^{n-1}}{n-1}\right)\\
& \leq & \frac{n^2}{k^2}\left(\frac{1}{(n-1)n}+O\left(\frac{(1-k\delta^2\phi(\delta))^n}{n}\right)\right)\\
& \leq & C\frac{n^2}{k^2}\left(\frac{1}{n^2}+\frac{1}{n}O((1-k\delta^2)^n)\right)\\
& \leq & C\frac{n^2}{k^2}\left(\frac{1}{n^2}+\frac{1}{n}O(e^{-nk\delta^2})\right)\\
& \leq & C\frac{n^2}{k^2}\left(\frac{1}{n^2}+O\left(\frac{1}{n^3}\right)\right) \leq C
\end{eqnarray*}
\end{enumerate}
\end{proof}

\newtheorem{mainthm}[mainsec]{Theorem}
\begin{mainthm}
\label{mainthm}
Suppose that $\ds\theta(t)$ is twice differentiable in a neighborhood $\ds[-\delta,\delta]$ of $\:0$, except perhaps at $0$, and there exists 
a function $\ds \phi(t)$ satisfying the properties $\ds (1)-(3)$ of Lemma ~\ref{lemone} such that
\begin{enumerate}
\item
\mbox{}
$\ds|\theta(t)| \leq  1-kt^2\phi(t)$
\item
\mbox{}
$\ds\left|\frac{\thf(t)}{t}\right| \leq  c\phi(t)$
\item
\mbox{}
$\ds|\ths(t)| \leq  c\phi(t)$
\end{enumerate}
on $\ds [-\delta,\delta]$ for some $k\, , c>0$, then 
\begin{eqnarray*}
|\mu_n(x+y)-\mu_n(x)|\leq c\frac{|y|}{|x|^2} & \mbox{for} & n\geq |x|^{\delta/8}\,\mbox{and}\,
|y|\leq\frac{|x|}{2}
\end{eqnarray*}  
\end{mainthm}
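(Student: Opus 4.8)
The plan is to write the difference as a single Fourier integral,
\[
\mu_n(x+y)-\mu_n(x)=\ith \theta^n(t)\,e(-xt)\bigl(e(-yt)-1\bigr)\dt,
\]
split it at $|t|=\delta$, and extract the two powers of $1/x$ on the inner piece by integrating by parts twice. On the outer region $\delta\le |t|\le 1/2$ Theorem~\ref{petrov} gives $|\theta(t)|\le\rho<1$; using $|e(-yt)-1|\le 2\pi|y||t|\le\pi|y|$ the outer contribution is at most $\pi|y|\rho^n$, and since $n\ge|x|^{\delta/8}$ we have $\rho^n=\exp(n\log\rho)\le C|x|^{-2}$, so this piece is already $\le C|y|/|x|^2$. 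The same exponential decay will absorb every boundary term produced below, each of which carries a factor $\theta^n(\pm\delta)$ or $(\theta^n)'(\pm\delta)=n\theta^{n-1}(\pm\delta)\theta'(\pm\delta)$, both bounded by $Cn\rho^{n}$.

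On the inner region I would integrate by parts twice in $\int_{|t|<\delta}\theta^n(t)e(-\xi t)\dt$, once for $\xi=x$ and once for $\xi=x+y$, taking antiderivatives that vanish at the origin:
\[
P_\xi(t)=\frac{e(-\xi t)-1}{-2\pi i\xi},\qquad Q_\xi(t)=\frac{e(-\xi t)-1}{(-2\pi i\xi)^2}+\frac{t}{2\pi i\xi},
\]
so that $P_\xi'=e(-\xi t)$, $Q_\xi'=P_\xi$ and $P_\xi(0)=Q_\xi(0)=0$. Because $P_\xi(0)=Q_\xi(0)=0$ and $\theta'(t)\to0$ as $t\to0$ (indeed $|\theta'(t)|\le c|t|\phi(t)=c|t\phi(t)|\to0$ by Lemma~\ref{lemone}), the boundary contributions at $t=0^{\pm}$ vanish; this is exactly what lets the argument survive the blow-up of $\theta''$ at the origin, the only surviving boundary terms being those at $\pm\delta$ handled above. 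What remains is
\[
\int_{|t|<\delta}(\theta^n)''(t)\,\bigl(Q_{x+y}(t)-Q_x(t)\bigr)\dt,\qquad (\theta^n)''=n(n-1)\theta^{n-2}(\theta')^2+n\theta^{n-1}\theta''.
\]

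I would then estimate $Q_{x+y}-Q_x$. Its oscillatory part equals $-\frac{1}{4\pi^2}\left(\frac{e(-(x+y)t)-1}{(x+y)^2}-\frac{e(-xt)-1}{x^2}\right)$, which Lemma~\ref{caldlem} (applied with $t\mapsto -t$) bounds by $C|t|\,|y|/|x|^2$ once $2|y|<|x|$, while its linear part $\frac{t}{2\pi i}\left(\frac{1}{x+y}-\frac1x\right)$ is at most $|t|\,|y|/(|x|\,|x+y|)\le 2|t|\,|y|/|x|^2$ since $|x+y|\ge|x|/2$. Hence $|Q_{x+y}(t)-Q_x(t)|\le C|t|\,|y|/|x|^2$ and the remaining integral is bounded by $C\,(|y|/|x|^2)\int_{|t|<\delta}|(\theta^n)''(t)|\,|t|\dt$.

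Finally, inserting the three hypotheses $|\theta|\le 1-kt^2\phi(t)$, $|\theta'|\le c|t|\phi(t)$ and $|\theta''|\le c\phi(t)$ bounds the last integrand by
\[
c\,n(n-1)(1-kt^2\phi(t))^{n-2}|t|^3\phi^2(t)+c\,n(1-kt^2\phi(t))^{n-1}|t|\phi(t),
\]
whose integrals over $(-\delta,\delta)$ are exactly the quantities shown to be bounded by a constant independent of $n$ in the preceding Lemma (parts (2) and (1), respectively). This yields $\int_{|t|<\delta}|(\theta^n)''(t)||t|\dt\le C$, so the inner contribution is $\le C|y|/|x|^2$, which together with the outer estimate gives $|\mu_n(x+y)-\mu_n(x)|\le c\,|y|/|x|^2$. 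The main obstacle is organizational rather than analytic: arranging the two integrations by parts so that their antiderivatives vanish at the origin (absorbing the nonintegrable growth of $\theta''$ there), and recognizing the resulting weighted integral of $|(\theta^n)''|$ as precisely the object already controlled by the preceding Lemma — after which properties $(1)$--$(3)$ of $\phi$ plug in directly.
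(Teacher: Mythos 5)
Your proposal is correct and follows essentially the same route as the paper: split the inversion integral at $|t|=\delta$, integrate by parts twice on the inner piece so that the boundary behaviour at the origin is killed (the paper does this by subtracting the constant $1$ from $e^{-2\pi i x t}$ after the second integration by parts rather than by choosing antiderivatives vanishing at $0$, but the effect is identical), control the resulting difference of kernels by Lemma~\ref{caldlem}, and bound $\int_{-\delta}^{\delta}|(\theta^n)''(t)||t|\,\mbox{d}t$ by expanding $(\theta^n)''$ and invoking parts (1) and (2) of the preceding Lemma. The only cosmetic difference is in the tail and boundary terms, where you extract the factor $|y|$ from $|e(-yt)-1|$ while the paper simply uses $|y|\geq 1$ for nonzero integers.
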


\theoremstyle{definition}
\newtheorem{newremark}[mainsec]{Remark}
\begin{newremark}
If $\ds \theta''(t)$ is bounded on some $\ds [-\delta,\delta]$ then the function
$\ds \phi(t)=\sup_{t\in[-\delta,\delta]}\{|\theta''(t)|\}$ satisfies preporties $\ds (1)-(3)$
of Lemma~\ref{lemone} as well as $\ds (1)-(3)$ of Theorem~\ref{mainthm}. Thus Theorem~\ref{mainthm} applies to this case. Note that $(1)$
of Theorem~\ref{mainthm} follows from Theorem~\ref{petrov}.
\end{newremark}

\theoremstyle{plain}
\begin{proof}[Proof of Theorem~\ref{mainthm}]
By the inversion formula
\begin{eqnarray*}
\mu_n(x) & = & \ith \theta^n(t)e^{-2\pi ixt}\dt \\
& = & \int_{|t|\leq\delta}\theta^n(t)e^{-2\pi ixt}\dt +\int_{|t|\geq\delta}\theta^n(t)e^{-2\pi ixt}\dt\\
& = & I_{1,x}+I_{2,x}
\end{eqnarray*}

Now, if $\ds\theta^n(t) =\theta_n(t)$ we note that since $\ds\thf(t)=\int_{\epsilon}^t\ths(t)+\thf(\epsilon)$,
letting $\ds \epsilon\rightarrow 0$ we have $\ds\thf(t)=\int_0^t\ths(t)$ and $\ds\thf(t)$ is absolutely continuous
on $\ds [-\delta,\delta]$ hence

\begin{eqnarray*}
I_{1,x} & = & \frac{1}{2\pi i}\left(-\left.\frac{\theta_n(t)e^{-2\pi ixt}}{x}\right|_{-\delta}^{\delta}+\frac{1}{x}\int_{-\delta}^{\delta}\theta'_n(t)e^{-2\pi ixt}\dt\right)\\
& = & \frac{1}{2\pi i}\left(\left.-\frac{\theta_n(t)e^{-2\pi ixt}}{x}\right|_{-\delta}^{\delta}\right)+\frac{1}{4\pi ^2}\left(\left.\frac{\theta'_n(t)e^{-2\pi ixt}}{x^2}\right|_{-\delta}^{\delta}\right)\\
& - & \frac{1}{4\pi^2}\left(\frac{1}{x^2}\int_{-\delta}^{\delta}\theta''_n(t)e^{-2\pi ixt}\dt\right) \\
& = & Q_x -\frac{1}{4\pi^2x^2}\int_{-\delta}^{\delta}\theta''_n(t)e^{-2\pi ixt}\dt
\end{eqnarray*}
\begin{eqnarray*}
& = & Q_x-\frac{1}{4\pi x^2}\int_{-\delta}^{\delta}\theta''_n(t)(e^{-2\pi ixt}-1)\dt-\frac{1}{4\pi x^2}\int_{-\delta}^{\delta}\theta''_n(t)\dt\\
& = & Q_x -\frac{1}{4\pi^2x^2}\left(\theta'_n(\delta)-\theta'_n(-\delta)\right)-\frac{1}{4\pi^2x^2}\int_{-\delta}^{\delta}\theta''_n(t)(e^{-2\pi ixt}-1)\dt\\
& = & Q_x+P_x-\frac{1}{4\pi^2x^2}\int_{-\delta}^{\delta}\theta''_n(t)(e^{-2\pi ixt}-1)\dt\: .
\end{eqnarray*}

Therefore $$ \mu_n(x)=-\frac{1}{4\pi^2x^2}\int_{-\delta}^{\delta}\theta''_n(t)(e^{-2\pi ixt}-1)\dt +Q_x+P_x+I_{2,x}.$$

We have, for $\ds 2|y|\leq |x|$ by Lemma~\ref{caldlem},

\begin{eqnarray*}
\lefteqn{\left|\int_{-\delta}^{\delta}\theta''_n(t)\frac{e^{-2\pi i(x+y)t}-1}{(x+y)^2}\dt  -  \int_{-\delta}^{\delta}\theta''_n(t)\frac{e^{-2\pi ixt}-1}{x^2}\dt\right|} \\
& \leq & C\frac{|y|}{|x|^2}\int_{-\delta}^{\delta}|\theta''_n(t)||t|\dt\:\mbox{recalling that}\:\ths_n(t)=(\theta^n(t))''\:\mbox{and by}\:\ref{petrov}\:\mbox{and}\:\ref{resultcom}  \\
& \leq & C\frac{|y|}{|x|^2}\left[n(n-1)\int_{-\delta}^{\delta}|\theta^{n-2}(t)||\thf(t)|^2|t|\dt+n\int_{-\delta}^{\delta}|\theta^{n-1}(t)||\ths(t)||t|\dt\right]\\
& \leq & C\frac{|y|}{|x|^2}\left(n^2\int_{-\delta}^{\delta}(1-kt^2\phi(t))^{n-2}|t|^3\phi^2(t)\dt+n\int_{-\delta}^{\delta}(1-kt^2\phi(t))^{n-1}\phi(t)|t|\dt\right)\\
& \leq & C\frac{|y|}{|x|^2}. 
\end{eqnarray*}

Since $\ds |\theta^n(t)|\leq e^{-C nt^2}$, $\ds |\theta^n(-\delta)|$, $\ds|\theta^n(\delta)|\leq e^{-Cn\delta^2}$ and\\ 
$\ds |\thf(\delta)|$, $\ds |\thf(-\delta)|=|n\theta^{n-1}(\delta)\thf(\delta)|$, $\ds |n\theta^{n-1}(-\delta)\thf(-\delta)|\leq C n e^{-n\delta^2}$, we have
$\ds |I_{2,x}|,|P_x|,|Q_x|\leq C n e^{-n\delta^2}\leq \frac{C}{n^{16/\delta}}\leq \frac{c}{|x|^2}$
provided that $\ds n\geq |x|^{\delta/8}$. Since $\ds y\in\mathbb{Z}$ and $\ds |y|\leq \frac{|x|}{2}$ we have

$$|I_{2,x+y}-I_{2,x}|,|P_{x+y}-P_{x}|,|Q_{x+y}-Q_{x}|\leq \frac{c}{|x|^2}\leq C\frac{|y|}{|x|^2}.$$

The theorem follows.
\end{proof}

\noindent Combining \ref{corm}, \ref{resultcom}, \ref{lemone} and \ref{mainthm} we obtain.
\newtheorem{finalthm}[mainsec]{Theorem}
\begin{finalthm}
Let $\ds\mu$ be a probability with $\ds m_1(\mu)<\infty$, bounded angular ratio and $\ds\sum_{|k|\leq n}k^2\mu(k)=O(n^{1-\delta})$ for some $\ds 0<\delta\leq 1$. 
Suppose $\ds\theta(t)=f(t)+ig(t)$ is the Fourier transform of $\ds\mu$ and $\ds\ths(t)$ exists at all points except possibly $0$
in a neighborhood $\ds [-\delta,\delta]$. Then if $\ds f''(t)=p(t)+O(1)<0$ where $p(t)$ is non-decreasing on $S$, there exists $\ds 0<\alpha\leq 1$ such that 
$$|\mu^n(x+y)-\mu^n(x)|\leq C\frac{|y|^{\alpha}}{|x|^{1+\alpha}}\:,\forall\:|y|\leq\frac{|x|}{2}.$$
\end{finalthm}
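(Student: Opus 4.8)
The plan is to glue together the two regimes already prepared in the earlier results: the decay estimate of Corollary~\ref{corm}, valid when $n$ is small relative to $|x|$, and the smoothness (difference) estimate of Theorem~\ref{mainthm}, valid when $n$ is large, matched at the threshold $n=|x|^{\delta/8}$. The first task is to confirm that Theorem~\ref{mainthm} is actually applicable, i.e. to exhibit a function $\phi$ enjoying properties $(1)$--$(3)$ of Lemma~\ref{lemone} and satisfying $(1)$--$(3)$ of Theorem~\ref{mainthm}. Since $f''=p+O(1)<0$, the monotone function $p$ is bounded above, so as $t\to0$ it tends either to $-\infty$ or to a finite limit. When $p(t)\to-\infty$, Proposition~\ref{resultcom} supplies property $(1)$, namely $|\theta(t)|\le 1-c\,\phi(t)t^2$ with $\phi(t)=\left|f'(t)/t\right|$, while Lemma~\ref{lemone} supplies the structural properties of $\phi$ together with the remaining bounds $\left|\theta'(t)/t\right|\le c\phi(t)$ and $|\theta''(t)|\le c\phi(t)$. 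When $p$ stays bounded, $f''$, and hence $\theta''$ (recall $|g''|\le c|f''|$ from Lemma~\ref{newlemma}(b)), is bounded on $[-\delta,\delta]$, and the Remark following Theorem~\ref{mainthm} permits $\phi\equiv\sup|\theta''|$. In either case Theorem~\ref{mainthm} applies.

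With the hypotheses secured, I would set $\alpha=\sigma=\min\{15\delta/16,\,3/4\}$ and split on the size of $n$. For $n\ge|x|^{\delta/8}$ Theorem~\ref{mainthm} gives at once $|\mu^n(x+y)-\mu^n(x)|\le c\,|y|/|x|^2$; since $0<|y|\le|x|/2\le|x|$ and $\alpha\le1$ we have $|y|^{1-\alpha}\le|x|^{1-\alpha}$, hence $|y|/|x|^2\le|y|^\alpha/|x|^{1+\alpha}$, the desired bound. For $n\le|x|^{\delta/8}$ I would bound the two terms separately. Corollary~\ref{corm} gives directly $|\mu^n(x)|\le C|y|^\sigma/|x|^{1+\sigma}$. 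For $\mu^n(x+y)$, the constraint $|y|\le|x|/2$ forces $|x+y|\ge|x|/2$, so $n\le|x|^{\delta/8}\le 2^{\delta/8}|x+y|^{\delta/8}$; feeding this into Proposition~\ref{result1} (the source of Corollary~\ref{corm}) the factor $2^{\delta/8}$ is absorbed into the constant and yields $|\mu^n(x+y)|\le C/|x+y|^{1+\sigma}\le C/|x|^{1+\sigma}$. Adding the two terms and using $|y|^\sigma\ge1$ (as $y\in\mathbb{Z}$, $y\neq0$; the case $y=0$ being trivial) gives $|\mu^n(x+y)-\mu^n(x)|\le C/|x|^{1+\sigma}\le C|y|^\sigma/|x|^{1+\sigma}$. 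Combining the two ranges of $n$ completes the proof with $\alpha=\sigma$.

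The only genuinely delicate point is the verification step at the outset. One must check that the single hypothesis ``$f''=p+O(1)<0$ with $p$ monotone'' is strong enough to produce a function $\phi$ meeting every demand of Lemma~\ref{lemone} and Theorem~\ref{mainthm}, and in particular that it correctly dichotomizes into the genuinely new, infinite-second-moment regime $p(t)\to-\infty$ (where Proposition~\ref{resultcom} replaces the classical bounded-$\theta''$ estimate) and the degenerate bounded regime (which collapses to the classical case via the Remark). Once $\phi$ is in hand, the case split on $n$ and the elementary inequalities reducing $|y|/|x|^2$ and $C/|x|^{1+\sigma}$ to the common form $|y|^\alpha/|x|^{1+\alpha}$ are routine bookkeeping; the only care needed there is the constant $2^{\delta/8}$ picked up when transferring the decay estimate from $x$ to $x+y$, which is harmless because $\sigma$ is chosen strictly below the available decay exponents $15\delta/16$ and $3/4$.
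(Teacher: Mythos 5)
Your proposal is correct and follows exactly the route the paper intends: the paper offers no written proof beyond ``Combining \ref{corm}, \ref{resultcom}, \ref{lemone} and \ref{mainthm} we obtain,'' and your gluing of Corollary~\ref{corm} (for $n\leq|x|^{\delta/8}$, bounding $\mu^n(x)$ and $\mu^n(x+y)$ separately) with Theorem~\ref{mainthm} (for $n\geq|x|^{\delta/8}$), taking $\alpha=\sigma=\min\{15\delta/16,3/4\}$, is precisely that combination. Your explicit dichotomy on whether $p(t)\to-\infty$ (using Proposition~\ref{resultcom} and Lemma~\ref{lemone}) or $p$ stays bounded (using the Remark after Theorem~\ref{mainthm}) is a detail the paper leaves implicit, and you handle it correctly.
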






\bibliographystyle{plain}
\bibliography{ChrispaperFRbib}

\end{document}